\documentclass{amsart}
\def\cal#1{\mathcal{#1}}



          \usepackage{amssymb}
          \usepackage{amsmath}
          \usepackage{amsthm}
          \usepackage{amsfonts}
          \usepackage[english]{babel}
          \usepackage[utf8]{inputenc}
          \usepackage{enumerate}
          \usepackage{graphicx}
          \usepackage{url}
          \usepackage{color}

\usepackage{natbib}
\usepackage{epsfig}
\usepackage{ifthen}

\usepackage[mathscr]{eucal}

\newtheorem{Thm}{Theorem}[section]
\numberwithin{equation}{section}
\newtheorem{Lem}[Thm]{Lemma}
\newtheorem{Prop}[Thm]{Proposition}
\newtheorem{Coro}[Thm]{Corollary}

\theoremstyle{definition}


\newcommand{\comment}[1]{}
\newcommand{\ind}{{\bf 1}}

\def\inddd#1{{\ind}_{\left\{#1\right\}}}
\def\indn#1{\{#1_n\}_{n\in\N}}
\newcommand{\proba}{\mathbb P}
\newcommand{\esp}{{\mathbb E}}

\newcommand{\inv}{^{-1}}

\def\cal{\mathcal}
\newcommand{\calA}{{\cal A}}
\newcommand{\calB}{{\cal B}}

\newcommand{\calE}{{\cal E}}
\newcommand{\filF}{{\cal F}}

\newcommand{\calG}{{\cal G}}

\newcommand{\calK}{{\cal K}}

\def\C{{\mathbb C}}

\def\Z{{\mathbb Z}}

\newcommand{\eqnh}{\begin{eqnarray*}}
\newcommand{\eqne}{\end{eqnarray*}}
\newcommand{\eqnhn}{\begin{eqnarray}}
\newcommand{\eqnen}{\end{eqnarray}}
\newcommand{\equh}{\begin{equation}}
\newcommand{\eque}{\end{equation}}

\def\summ#1#2#3{\sum_{#1 = #2}^{#3}}
\def\prodd#1#2#3{\prod_{#1 = #2}^{#3}}
\def\sif#1#2{\sum_{#1=#2}^\infty}

\newcommand{\eqd}{\stackrel{d}{=}}

\def\topp#1{^{(#1)}}

\def\nn#1{{\left\|#1\right\|}}

\def\abs#1{\left|#1\right|}

\def\ccbb#1{\left\{#1\right\}}

\def\pp#1{\left(#1\right)} 
\def\spp#1{(#1)}
\def\bb#1{\left[#1\right]}

\def\mmid{\;\middle\vert\;}

\def\floor#1{\left\lfloor #1 \right\rfloor}
\def\sfloor#1{\lfloor #1 \rfloor}

\def\qmand{\quad\mbox{ and }\quad}

\def\qmwith{\quad\mbox{ with }\quad}
\def\mfa{\mbox{ for all }}
\def\qmfa{\quad\mbox{ for all }\quad}
\def\mmas{\mbox{ as }}

\def\wt#1{\widetilde{#1}}
\def\wb#1{\overline{#1}}
\def\what#1{\widehat{#1}}

\def\weakto{\Rightarrow}

\def\limn{\lim_{n\to\infty}}

\def\limsupn{\limsup_{n\to\infty}}
\def\liminfn{\liminf_{n\to\infty}}

\def\Z{{\mathbb Z}}

\def\R{{\mathbb R}}
\def\Rd{{\mathbb R^d}}
\def\N{{\mathbb N}}

\def\USC{{\rm USC}}
\def\SM{{\rm SM}}

\newcommand{\id}{infinitely divisible}

\newcommand{\eid}{\stackrel{d}{=}}

\title[Extremal theory]{Extremal theory for long range dependent
  infinitely divisible processes }
\author{}


\begin{document}\sloppy
\keywords{Extreme value theory, random sup-measure, random upper-semi-continuous function, stable regenerative set, stationary infinitely divisible process, long range dependence, weak convergence.}
\subjclass[2010]{Primary, 60G70
; 60F17, 
60G57
}
\author{Gennady Samorodnitsky}
\address{
Gennady Samorodnitsky\\
School of Operations Research and Information Engineering\\
Cornell University\\
220 Rhodes Hall\\
Ithaca, NY 14853, USA.
}
\email{gs18@cornell.edu}

\author{Yizao Wang}
\address
{
Yizao Wang\\
Department of Mathematical Sciences\\
University of Cincinnati\\
2815 Commons Way\\
Cincinnati, OH, 45221-0025, USA.
}
\email{yizao.wang@uc.edu}

\thanks{Samorodnitsky's research was partially supported by the NSF grant
  DMS-1506783 and the ARO
grant  W911NF-12-10385 at Cornell University. Wang's research was
partially supported by the NSA grants  H98230-14-1-0318 and
H98230-16-1-0322, and the ARO grant
W911NF-17-1-0006 at University of Cincinnati.}

\date{\today}
\begin{abstract}
We prove limit theorems of an entirely new type for certain long memory
regularly varying stationary \id\ random processes. These theorems
involve multiple 
phase transitions governed by how long the memory is. Apart from one
regime, our results exhibit limits that are not among the
classical extreme value distributions. Restricted to the one-dimensional
case, the distributions we obtain interpolate, in the appropriate
parameter range, the $\alpha$-Fr\'echet distribution and the skewed
$\alpha$-stable  distribution. In general, the limit is  a new family
of stationary and self-similar random sup-measures with parameters
$\alpha\in(0,\infty)$ and $\beta\in(0,1)$, with 
 representations based on intersections of independent $\beta$-stable regenerative sets. The tail of the limit random sup-measure on each interval with finite positive length is regularly varying with index $-\alpha$. 
 The intriguing structure of these random sup-measures is due to
 intersections of independent $\beta$-stable regenerative sets and the
 fact  that the number of such sets intersecting simultaneously increases to infinity as $\beta$ increases to one.
 The results in this paper extend substantially  previous investigations where only $\alpha\in(0,2)$ and $\beta\in(0,1/2)$ have been considered. 
\end{abstract}

\maketitle

\section{Introduction}
Given a stationary process $(X_n)_{n\in\N}$,  we are interested in the
asymptotic behavior of  the maximum
\[
M_n := \max_{i=1,\dots,n}X_i.
\]
After appropriate normalization, what distributions may arise in the limit?
This is a classical question in probability theory with a very long history. In the case that $(X_n)_{n\in\N}$ is a sequence of independent and identically distributed (i.i.d.)~random variables, all possible limits of the weak convergence in the form of
\equh\label{eq:marginal}
\frac{M_n-a_n}{b_n}\weakto Z
\eque have been known since \cite{fisher28limiting} and
\cite{gnedenko43sur}: these form  the family of extreme-value
distributions, consisting of  Fr\'echet, Gumbel and Weibull
types. Furthermore, the functional  extremal limit theorem in the form of
\equh\label{eq:FELT}
\pp{\frac{M_{\floor{nt}}-a_n}{b_n}}_{t\ge 0}\weakto (Z(t))_{t\ge0}
\eque
in an appropriate topological space has also been known since \cite{dwass:1964} and \cite{lamperti:1964}. The limit process $Z$, when non-degenerate, is known as the {\em extremal process}.

If a stationary process $(X_n)_{n\in\N}$ is not a sequence of
i.i.d.~random variables, the extremes can cluster,  and this can
affect the extremal limit theorems for such processes. 
Research along this line has started since the 60s.  A common feature
of many results in the  literature on this topic is the important role
of the so-called {\em extremal index} $\theta\in(0,1]$. When this
index exists, it affects the limit theorems through the fact that,
asymptotically, the limit law of $M_n$ is the same as that of $\wt
M_{\floor{\theta n}}$, the maximum  of $\floor{\theta n}$
i.i.d.~copies of $X_1$, when one uses the same normalization in both
cases. 
This reflects the following  picture of extremes of such processes:
extreme values of the process occur in finite random clusters, the smaller
$\theta$ indicates larger, on average,  cluster size. 
It is also worth noting that for all $\theta\in(0,1]$, the order of the normalization and the limit laws in~\eqref{eq:marginal} and~\eqref{eq:FELT} are the same as in the i.i.d.~case. 
Therefore, one can view processes with
extremal index  $\theta\in(0,1]$ as having, in the appropriate sense,
{\em short memory} (the reasons for this terminology can be found in
\cite{samorodnitsky:2016}). 
 Standard references for extreme value theory on i.i.d.~and weakly
 dependent sequences include
 \cite{leadbetter:lindgren:rootzen:1983,resnick:1987,dehaan:ferreira:2006}. Point-process
 techniques are fundamental and powerful when investigating such
 problems.

There are situations that for the limit theorems of the
types~\eqref{eq:marginal} and~\eqref{eq:FELT} to hold, the
normalization needs to be of a different order, and even the limit may
be different, from the short memory case.   
We refer to the dependence in such examples as strong or  {\em long range dependence}. 
See the recent monograph \cite{samorodnitsky:2016} for more background and recent developments on long range dependence  in terms of limit theorems (not necessarily extremal ones).
The first example of long range dependence in extreme value theory is
for stationary Gaussian processes: \cite{mittal75limit} showed that
when the correlation $r_n$ satisfies $\limn r_n\log n =
\gamma\in(0,\infty)$, the limit law of $M_n$ is Gumbel convoluted with
a Gaussian distribution, in contrast to the case of $\limn r_n\log n =
0$ where the Gumbel distribution arises in the limit, due to
\cite{berman:1964}. However, very few examples of extremes of
stationary non-Gaussian processes with long range dependence have been
discovered since then.  One of the known examples is important for us
in this paper and we will discuss it below. 

A fundamental work is due to \cite{obrien:torfs:vervaat:1990} who, in
the process of identifying all possible limits of extremes of a
sequence of stationary random variables, pointed out that a more
natural and revealing way to investigate extremes is via the {\em
  random sup-measures}. In this framework, for each $n$ one
investigates the  random sup-measure $M_n$ in the form of 
\[
M_n(B) := \max_{k\in nB\cap\N}X_k, B\subset \R_+,
\]
in an appropriate topological space. Then, a limit theorem for $M_n$
entails at least the finite-dimensional convergence part in a
functional extremal limit theorem as in~\eqref{eq:FELT} 
when restricted to all $B$ in the form of $B = [0,t], t\ge
0$. \cite{obrien:torfs:vervaat:1990} showed that all possible 
random sup-measures $\eta$ on $[0,\infty)$ arising as limits 
 starting from a stationary process $(X_n)_{n\in\N}$ are, up to affine
 transforms, stationary and self-similar, in the sense that 
\[
\eta(\cdot) \eid \eta(\cdot+b), b>0 \qmand \eta(a\cdot) \eid a^H\eta(\cdot), a>0
\]
for some $H>0$. They also provided
examples  of such random sup-measures. However, the investigation of
\cite{obrien:torfs:vervaat:1990} does not directly help in
understanding extremal limit theorems under long range dependence.

In this paper, we investigate the extremes of a general class of
stationary infinitely divisible processes whose law is linked to the
law of a certain null-recurrent Markov chain. Two crucial numerical
parameters impact the properties of such infinitely divisible
processes: $\alpha\in(0,\infty)$ and $\beta\in(0,1)$: the parameter 
$\alpha$ corresponds to the regular variation index of the tail of the
marginal distribution, and $\beta$ determines the rate of the
recurrence of the underlying Markov chain (the larger the $\beta$, the
faster the rate) and, as a result, plays an important role in determining
the memory of the infinitely divisible process. 
The extremes of symmetric $\alpha$-stable processes in this class have
been first investigated in 
\cite{samorodnitsky:2004a}, who showed that when $\beta\in(0,1/2)$,
the partial maxima converge weakly to the Fr\'echet distribution,
although under the normalization $b_n = n^{(1-\beta)/\alpha}$ instead
of $n^{1/\alpha}$ used in the i.i.d. case. (Since
  infinitely divisible processes we are considering are heavy-tailed,
we take the shift $a_n = 0$ in all extremal limit theorems.) The different order of
normalization already indicates long range dependence of the
process. Furthermore, it was pointed out in the same paper that when
$\beta\in(1/2,1)$, the dependence was so strong that the partial
maxima were likely not to converge to the Fr\'echet distribution, but
an alternative  limit distribution was not described. 

Further studies of the extrema of this class of processes have
appeared more  recently, still in the symmetric $\alpha$-stable case,
with $\beta\in (0,1/2)$ (though in a different notation). 
In \cite{owada:samorodnitsky:2015a}, it was shown that the limit in
the functional extremal theorem as in~\eqref{eq:FELT} is, up to a
multiplicative constant, a time-changed extremal process, 
\[
\pp{Z_\alpha(t^{1-\beta})}_{t\ge 0},
\]
where $(Z_\alpha(t))_{t\ge 0}$ is the extremal process for a sequence
of i.i.d.~random variables with tail index $\alpha$ (the
$\alpha$-Fr\'echet extremal process). Subsequently, 
\cite{lacaux:samorodnitsky:2016}, established a limit theorem  in
the framework of convergence of random sup-measures, and, up to a
multiplicative constant,  the limit random sup-measure can be
represented as 
\equh\label{eq:LS16}
\eta(\cdot) = \bigvee_{j=1}^\infty U_j\topp\alpha\inddd{\pp{V\topp\beta_j+R\topp\beta_j}\cap\cdot \neq\emptyset},
\eque
where $(U\topp\alpha_j,V\topp\beta_j,R\topp\beta_j)_{j\in\N}$ is a
measurable enumeration of the points of a Poisson point process on
$\R_+\times\R_+\times\filF(\R_+)$ with intensity $\alpha
u^{-\alpha-1}du (1-\beta) v^{-\beta}dv dP_\beta$. Here $\filF(\R_+)$
is the space of closed subsets of $\R_+$ equipped with Fell topology,
and $P_\beta$ is the law of a $\beta$-stable regenerative set, the closure of
the range of a $\beta$-stable subordinator, on $\filF(\R_+)$. Then 
\[
(\eta([0,t]))_{t\ge 0} \eqd\pp{Z_\alpha(t^{1-\beta})}_{t\ge0},
\] 
but the 
random sup-measure reveals more structure than the  time-changed
extremal process. 

In this paper we fill the  gaps left in the previous studies. First of
all, we move away from the assumption of stability to a more general
class of stationary \id\ processes. This allows us to remove the
restriction of $\alpha\in (0,2)$ in our limit theorems. Much more
importantly,   we remove the assumption $\beta\in
(0,1/2)$. This allows us to consider the extrema of processes whose
memory is very long. Our 
results
confirm that the Fr\'echet limits
obtained in \cite{samorodnitsky:2004a} and the subsequent publications
disappear when $\beta\in (1/2,1)$. 
In fact, entirely new limits appear. 
 Even the one-dimensional distributions we obtain as marginal
limits have not, to the best of our knowledge, been previously
described. The limiting random sup-measure is of a shot-noise type (see e.g.~\citep{vervaat79stochastic}),
and it turns out to be  uniquely
 determined by the random upper-semi-continuous function 
\[
\eta^{\alpha,\beta}(t) := \sum_{j=1}^\infty U_j\topp\alpha\inddd{t\in{V\topp\beta_j+R\topp\beta_j}}, t\ge0,
\]
with $(U\topp\alpha_j,V\topp\beta_j,R\topp\beta_j)_{j\in\N}$ as
before. When $\beta\in(0,1/2]$, this is the same random sup-measure as
the one 
in~\eqref{eq:LS16}, as independent $\beta$-stable regenerative sets do
not intersect for such a $\beta$. For $\beta>1/2$, however, eventual
intersections occur 
almost surely, and the larger the $\beta$ becomes, more independent regenerative
sets can intersect at the same time.  
As Section~\ref{sec:RSM} below shows, for every $\alpha\in(0,\infty)$,
$(\eta^{\alpha,\beta})_{\beta\in(0,1)}$ forms a family of random
sup-measures corresponding to the full range of dependence: from
independence ($\beta\downarrow0$) to complete dependence
($\beta\uparrow1$).  
Importantly, if $\alpha\in(0,1)$, the marginal distributions, for
example those  of
$\eta^{\alpha,\beta}([0,1])$, form a family of distributions that
interpolate between the $\alpha$-Fr\'echet distribution (resulting
when $\beta \in(0,1/2]$) 
and the totally skewed to the right $\alpha$-stable distribution as $\beta\uparrow
1$.  

The paper is organized as follows. In Section~\ref{sec:prelim} we
present background information on random closed sets and  random
sup-measures. In Section~\ref{sec:RSM} we introduce and investigate
the limiting random sup-measure. The stationary \id\ process with long
range dependence whose extremes we study is introduced 
in Section~\ref{sec:id}, and a limit theorem for these extremes in the
context of random sup-measures is proved  in Section~\ref{sec:limit}. 

\section{Random closed sets and sup-measures}\label{sec:prelim}
We first provide background on  random closed sets. Our main reference
is \cite{molchanov05theory}. Let $\filF(E)$ denote the space of all
closed subsets of an interval $E\subset\R$. In this paper we only work
with $E = [0,1]$ and $E=[0,\infty)$. The space $\filF = \filF(E)$ is
equipped with the Fell topology generated by  
\[
\filF_G := \{F\in\filF:F\cap G\neq\emptyset\}\mfa G\in\calG,
\]
where $\calG = \calG(E)$  is the collection of all open subsets of $E$, and 
 \[
 \filF^K:=\{F\in\filF:F\cap K=\emptyset\} \mfa K\in\calK,
 \]
 whre $\calK = \calK(E)$ is the collection of all compact subsets of
 $E$. This topology is metrizable, and $\filF(E)$ is compact  under
 it. If $\filF$ is equipped with the Borel $\sigma$-algebra 
 $\calB(\filF)$ induced by the Fell topology, a random closed set is a
 measurable mapping from a probability space to $(\filF,
 \calB(\filF))$. Given  random closed sets $(R_n)_{n\in\N}$ and $R$, a
 sufficient condition for weak convergence $R_n\weakto R$ is 
\[
\limn\proba(R_n\cap A \neq\emptyset) = \proba(R\cap A = \emptyset), \mfa A\in\calA \cap \mathfrak S_R,
\]
 where $\calA$ is the collection of all finite unions of open intervals, and 
$\mathfrak S_R$ is the collection of all continuity sets of 
$R$: the collection of relatively compact Borel sets $B$ such that
$\proba(R\cap \wb B\neq\emptyset) = \proba(R\cap B^o\neq\emptyset)$.  
See \citet[Corollary 1.6.9]{molchanov05theory}, where the collection $\calA$ is called a {\em separating class}.

We proceed with background on  sup-measures and upper-semi-continuous
functions. Our main reference is \cite{obrien:torfs:vervaat:1990}. See
also \cite{molchanov:strokorb:2016} and \cite{sabourin17marginal}
for some recent developments. Let  $E$ be as above, and
$\calG=\calG(E)$ the collection of open subsets of $E$. A map
$m:\calG\to[0,\infty]$ is a sup-measure, if  
\[
m\pp{\bigcup_{\alpha}G_\alpha} = \sup_\alpha m(G_\alpha)
\]
for all arbitrary collections of open sets $(G_\alpha)_\alpha$. Given
a sup-measure $m$, its sup-derivative, denoted by $d^\vee
m:E\to[0,\infty]$, is defined as 
\[
d^\vee m(t):=\inf_{G\ni t}m(G), t\in E.
\]
The sup-derivative of a sup-measure is an upper-semi-continuous
function, that is a function $f$ such that $\{f<t\}$ is open for all
$t>0$. Given an $[0,\infty]$-valued  upper-semi-continuous function $f$, the
sup-integral $i^\vee f:\calG\to[0,\infty]$ is defined as 
\[
i^\vee f(G) := \sup_{t\in G}f(t), G\in\calG,
\]
with $i^\vee f(\emptyset) = 0$ by convention. The sup-integral is a
sup-measure.   
Let $\SM = \SM(E)$ and $\USC = \USC(E)$ denote the spaces of all
sup-measures on $E$ and all $[0,\infty]$-valued  upper-semi-continuous
functions on $E$, respectively.   It turns out that 
$d^\vee$ is a bijection between $\SM$ and $\USC$, and $i^\vee$ is its inverse. 
Every $m\in \SM$ has a canonical extension to all subsets of $E$,
given by 
\[
m(B) = \sup_{t\in B}(d^\vee m)(t), B\subset E.
\]

The space $\SM$ is equipped with the so-called sup-vague topology. In
this topology, $m_n\to m$ if and only if 
\[
\limsupn m_n(K)  \le m(K) \mfa K\in\calK 
\]
and
\[
\liminfn m_n(G)  \ge m(G) \mfa G\in\calG. 
\]
This topology is metrizable and the space SM is compact in this
topology. The sup-vague topology on the space USC is then induced by
the bijection $d^\vee$, so the convergence of  
\[
m_n\to m \mbox{ in SM} \qmand d^\vee m_n\to d^\vee m \mbox{ in USC}
\]
are equivalent.

A random sup-measure is a random element in $(\SM,\calB(\SM))$ with
$\calB(\SM)$ the Borel $\sigma$-algebra induced by the sup-vague
topology. A random upper-semi-continuous function is defined
similarly. We will introduce the limiting random sup-measures in our
limit theorem through their corresponding random
upper-semi-continuous functions. When proving weak convergence for
random sup-measures we will utilize the following fact: 
given random sup-measures 
$(\eta_n)_{n\in\N}$ and $\eta$, weak convergence 
$\eta_n\weakto\eta$ in $S$
 is equivalent to the finite-dimensional convergence 
\[
(\eta_n(I_1),\dots,\eta_n(I_m))\weakto(\eta(I_1),\dots,\eta(I_m))
\]
for all 
$m\in\N$ and all open and $\eta$-continuity intervals $I_1,\dots,I_m$ ($I$
is $\eta$-continuity if $\eta(I) = \eta(\wb I)$ with probability one). 
See \cite{obrien:torfs:vervaat:1990}, Theorem 3.2.
 
We will need the following result on joint convergence of random closed sets. 
\begin{Thm}\label{thm:joint_closed_sets}
Let $\{A_k\}_{k=1,\dots,m}$ and 
$\{A_k(n)\}_{n\in\N, k=1,\dots,m}$ be   random closed sets in $\filF = \filF(\Rd)$, and set 
for $I\subset\{1,\dots,m\}$ 
\[
A_I(n) = \bigcap_{k\in I}A_k(n) \qmand A_I = \bigcap_{k\in I} A_k, \mfa I\subset\{1,\dots,m\}
\]
(by convention we set $A_\emptyset(n)= A_\emptyset =  \Rd$).
Assume that 
\equh\label{eq:joint}
(A_1(n),\dots,A_m(n))\weakto (A_1,\dots,A_m)
\eque
in $\filF^m$ as $n\to\infty$.

\begin{enumerate}[(i)]
\item If $A_I = \emptyset$ almost surely, then 
$A_I(n) \weakto \emptyset$.

\item If 
\equh\label{eq:marginal_A}
A_I(n)\weakto A_I \mmas n\to\infty, \mfa I\subset\{1,\dots,m\}.
\eque
Then 
\[
\ccbb{A_I(n)}_{I\subset\{1,\dots,m\}}\weakto \ccbb{A_I}_{I\subset\{1,\dots,m\}},
\]
in $\filF^{2^m}$.
\end{enumerate}
\end{Thm}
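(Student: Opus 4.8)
The plan is to handle the two parts separately, in both cases exploiting the single feature of finite intersections that survives in the Fell topology, namely upper semicontinuity. First I would record the elementary observation that for every compact $K\in\calK(\Rd)$ the set
\[
C_I(K):=\ccbb{(F_1,\dots,F_m)\in\filF^m:\bigcap_{k\in I}F_k\cap K\neq\emptyset}
\]
is closed in $\filF^m$: if $(F_1^{(j)},\dots,F_m^{(j)})\to(F_1,\dots,F_m)$ and $x_j\in\bigcap_{k\in I}F_k^{(j)}\cap K$, then compactness of $K$ extracts a subsequential limit $x\in K$, and the defining property of Fell convergence (a limit of points of convergent sets lies in the limit set) forces $x\in\bigcap_{k\in I}F_k$, so the limiting tuple lies in $C_I(K)$. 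For part (i) I would then apply the Portmanteau theorem to the assumed convergence~\eqref{eq:joint} and the closed set $C_I(K)$, obtaining $\limsupn\proba(A_I(n)\cap K\neq\emptyset)\le\proba(A_I\cap K\neq\emptyset)$. When $A_I=\emptyset$ almost surely the right-hand side vanishes for every $K$; since any bounded member of the separating class $\calA$ has compact closure, this gives $\proba(A_I(n)\cap A\neq\emptyset)\to0$ for all $A\in\calA$, and the separating class criterion of \citet[Corollary 1.6.9]{molchanov05theory} yields $A_I(n)\weakto\emptyset$.

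Part (ii) is the substantial one, and the obstacle is precisely that intersection is only upper, not lower, semicontinuous, so one cannot apply the continuous mapping theorem to the map $(F_1,\dots,F_m)\mapsto(\bigcap_{k\in I}F_k)_{I}$ directly; hypothesis~\eqref{eq:marginal_A} is exactly what restores the missing lower bound. I would argue through a Skorokhod coupling: since $\filF^m$ is compact and metrizable, \eqref{eq:joint} can be realized as almost sure coordinatewise convergence $\wt A_k(n)\to\wt A_k$. Fixing $I$ and writing $B_n=\bigcap_{k\in I}\wt A_k(n)$ and $B=\bigcap_{k\in I}\wt A_k$, upper semicontinuity of intersection gives $\limsupn B_n\subseteq B$ almost surely, which is equivalent to $\liminfn d(x,B_n)\ge d(x,B)$ for every $x$, where $d(x,\cdot)$ denotes the distance functional. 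On the other hand \eqref{eq:marginal_A}, transported to the coupling, reads $B_n\weakto B$, and since $F\mapsto d(x,F)$ is Fell-continuous this yields $d(x,B_n)\weakto d(x,B)$ for each $x$.

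The hard part is then an elementary one-dimensional lemma that fuses these two facts: if real random variables satisfy $Z_n\weakto Z$ and $\liminfn Z_n\ge Z$ almost surely, then $Z_n\to Z$ in probability. I would prove it by fixing a bounded, strictly increasing, continuous $f$, noting that $\liminfn f(Z_n)\ge f(Z)$ almost surely while $\esp f(Z_n)\to\esp f(Z)$, so that $D_n:=f(Z_n)-f(Z)$ obeys $\liminfn D_n\ge0$ almost surely and $\esp D_n\to0$; the former forces $D_n^-\to0$ almost surely and hence $\esp D_n^-\to0$, whence $\esp D_n^+\to0$ and $D_n\to0$ in probability. Applying this with $Z_n=d(x,B_n)$ gives $d(x,B_n)\to d(x,B)$ in probability for every $x$.

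Finally I would upgrade from single points to the whole set: running the previous step over a countable dense family of $x$'s, whose distance functionals metrize the Fell topology, gives $B_n\to B$ in probability in $\filF$ for each $I$, hence jointly over all $I$ in $\filF^{2^m}$. Convergence in probability implies convergence in distribution, and transferring back along the coupling (the tuple $(B_n)_I$ has the law of $\ccbb{A_I(n)}_I$, being a fixed measurable image of $(\wt A_1(n),\dots,\wt A_m(n))$) produces the asserted convergence $\ccbb{A_I(n)}_I\weakto\ccbb{A_I}_I$. The only delicate bookkeeping I anticipate is the standard fact that distance functionals to a countable dense set metrize the Fell topology on the closed subsets of the locally compact second countable space $\Rd$, together with the empty-set conventions $d(x,\emptyset)=+\infty$, under which all of the above steps, including the case $A_I=\emptyset$, remain valid.
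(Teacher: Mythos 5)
Your proposal is correct and follows essentially the same route as the paper: a Skorokhod coupling of \eqref{eq:joint}, upper semicontinuity of the intersection map to get $\limsup_n A_I(n)\subset A_I$, reduction to distance functions, and the same key lemma (weak convergence plus an almost sure $\liminf$ bound implies convergence in probability), with only cosmetic differences (Portmanteau on the closed hitting set $C_I(K)$ for part (i) instead of the coupling, and metrizing the Fell topology by countably many distance functionals instead of the paper's subsequence--diagonalization step). No gaps.
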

\begin{proof}
The Fell topology on $\filF = \filF(\Rd)$ 
is compact, Hausdorff, and second countable
\citep{salinetti81convergence}. In particular, it is  metrizable and
separable. Then, by Skorokhod's  representation theorem, from \eqref{eq:joint} we can find a
common probability  space, on which $\{A_k\}_{k=1,\dots,m}$ and
$\{A_k(n)\}_{n\in\N, k=1,\dots,m}$  are defined, so that the joint law
of $\{A_k\}_{k=1,\dots,m}$ is preserved, for each $n$  the joint law
of $\{A_k(n)\}_{k=1,\dots,m}$ is preserved as well (we use the same
notation), and we have the almost sure  convergence 
\equh\label{eq:marg.as}
A_k(n)\to A_k \mmas n\to\infty, \mfa k=1,\dots,m.
\eque
Fix a nonempty set $I\subset\{1,\dots,m\}$. 
By the upper semi-continuity
in the product Fell topology of the intersection operator (see
Appendix D in \cite{molchanov05theory}),  we have
\equh\label{eq:USC}
\limsupn A_I(n)\subset A_I, \mbox{ almost surely.}
\eque
This implies the first part of the theorem immediately.

For the second part of the theorem, from now on we assume that \eqref{eq:marginal_A} holds.
We will see that \eqref{eq:marg.as} and \eqref{eq:USC} imply that 
\equh\label{eq:P}
A_I(n)\to A_I \mmas n\to\infty \mbox{ in probability.}
\eque
This will, of course, prove that $\{A_I(n)\}_{I\subset\{1,\dots,m\}}
\to \{A_I\}_{I\subset\{1,\dots,m\}}$ in probability, and, hence, also
the desired weak convergence.  

We proceed to prove \eqref{eq:P}. Recall that the distance function
for a closed set $F\in\filF(\Rd)$ is  defined as
\[
\rho(x,F) = \min\{|x-y|: y\in F\}, \quad x\in\Rd.
\]
Then, the weak convergence in the Fell topology of random closed sets
$A_I(n)\weakto A_I$, is equivalent to the weak convergence in finite-dimensional distributions of the corresponding distance functions: 
\[
\rho_n(x) := \rho(x,A_I(n))\stackrel{f.d.d.}{\to}\rho(x) =
\rho(x,A_I), \, x\in\Rd; 
\]
see \citet[Theorem 2.5]{salinetti86convergence_in_distribution}. 
Note that \eqref{eq:USC} implies that for every $x\in\Rd$, 
\[
\liminfn\rho_n(x)\ge  \rho(x), \mbox{ almost surely, }  
\]
and we conclude from 
Lemma \ref{l:liminf} below that for every $x\in\Rd$, 
\equh\label{eq:convergence_P}
\rho_n(x) \to \rho(x) \mbox{ in probability as $n\to\infty$.}
\eque
In order to establish \eqref{eq:P}, it is enough to show that for
every subsequence, there exists a further subsequence, say
$\{n_k\}_{k\in\N}$, such that $A_I(n_k)\to A_I$ almost surely as
$k\to\infty$. For this purpose we use \eqref{eq:convergence_P}  as
follows. Convergence in the Fell topology on $\Rd$ is equivalent to the
pointwise convergence of the distance functions (see
e.g.~\citep[Theorem 2.2 (iii)]{salinetti81convergence}. Suppose we show
that a further subsequence as above can be found such that 
\equh\label{eq:rationals}
\lim_{k\to\infty}\rho_{n_k}(x) = \rho(x) \mfa x \in \mathbb Q^d,
\mbox{ almost surely.} 
\eque
Since the distance functions are Lipschitz with coefficient 1, on this
event of probability 1 we actually have the entire pointwise convergence of 
the distance functions, hence the convergence in the Fell topology of
$A_I(n_k)$ to $A_I$, as required. It remains to notice that
\eqref{eq:rationals} follows from   \eqref{eq:convergence_P} by the
standard diagonalization argument. We have thus proved the theorem. 
\end{proof}
\begin{Lem} \label{l:liminf}
Let $X,\, (X_n)_{n\in\N}$ be random variables defined on the same probability
space. Suppose that $X_n \weakto X$ and $\liminfn X_n\geq X$ a.s. Then
$X_n\to X$ in probability. 
\end{Lem}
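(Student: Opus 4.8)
The plan is to prove the two one-sided statements $\proba(X_n < X-\epsilon)\to 0$ and $\proba(X_n > X+\epsilon)\to 0$ for each fixed $\epsilon>0$, which together give convergence in probability. The lower tail is immediate and uses only the almost sure hypothesis: since $\liminfn X_n \ge X$ a.s., for a.e.\ $\omega$ one has $X_n(\omega) > X(\omega)-\epsilon$ for all large $n$, so the indicators $\mathbf 1\{X_n < X-\epsilon\}$ tend to $0$ almost surely; being bounded, bounded convergence yields $\proba(X_n < X-\epsilon)\to 0$. The upper tail is the substantive part, and it is precisely here that the weak convergence hypothesis must enter: the almost sure bound controls only undershoots, while weak convergence alone is a marginal statement that cannot by itself forbid a persistent overshoot, so the two hypotheses have to be combined. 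The device I would use to merge the joint almost sure information with the marginal convergence of laws is a bounded, continuous, strictly increasing transformation $g:\R\to\R$, for instance $g=\arctan$.

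For such a $g$, weak convergence gives $\esp[g(X_n)]\to \esp[g(X)]$. Monotonicity and continuity of $g$ upgrade the hypothesis to $\liminfn g(X_n)\ge g(X)$ a.s.: on the event $\{\liminfn X_n\ge X\}$, for every $\eta>0$ one eventually has $X_n>X-\eta$, hence $g(X_n)\ge g(X-\eta)$, and letting $\eta\downarrow 0$ uses left-continuity of $g$. Writing $h_n:=g(X_n)-g(X)$, this says $h_n^-\to 0$ a.s.; since $h_n^-$ is bounded, $\esp[h_n^-]\to 0$ by bounded convergence. Combined with $\esp[h_n]=\esp[g(X_n)]-\esp[g(X)]\to 0$, this gives $\esp[h_n^+]=\esp[h_n]+\esp[h_n^-]\to 0$, hence $\esp|h_n|\to 0$. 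Thus $g(X_n)\to g(X)$ in $L^1$, and in particular in probability.

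Finally I would transfer this back to $X_n$. Since $g$ is a homeomorphism of $\R$ onto the open interval $g(\R)$ with continuous inverse, and $g(X),g(X_n)$ take values in $g(\R)$, convergence in probability is preserved by $g^{-1}$: given any subsequence, extract a further subsequence along which $g(X_n)\to g(X)$ a.s., apply continuity of $g^{-1}$ at the (a.s.\ finite, hence interior) point $g(X)$ to obtain $X_n\to X$ a.s.\ along it, and conclude $X_n\to X$ in probability by the subsequence criterion. The one genuine obstacle is the upper tail, and the crux is the observation that a bounded monotone $g$ lets Fatou's lemma convert the one-sided almost sure inequality together with the equality of limiting expectations into full $L^1$ convergence; once this is seen, the remaining steps are routine.
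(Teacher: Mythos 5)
Your proof is correct and is essentially the paper's argument: the paper likewise reduces to bounded variables via $\arctan$, uses (reverse) Fatou together with the a.s.\ one-sided bound to show $\esp\bb{(X-X_n)_+}\to 0$, and combines this with $\esp X_n\to\esp X$ (from weak convergence) to get $L^1$ and hence in-probability convergence. Your splitting into lower/upper tails and the explicit transfer back through $g^{-1}$ are just more verbose renderings of the same steps.
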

\begin{proof}
We may assume without loss of generality that all random variables
involved are uniformly bounded (e.g. by applying the arctan 
function to everything). Then $\esp X_n\to \esp X$ as $n\to\infty$. Further,
by Fatou's lemma,
$$
0\leq \limsupn \esp \bb {(X-X_n)_+} \leq \esp \bb{\limsupn
(X-X_n)_+}\leq 0\,.
$$
That is, $\esp \bb{ (X-X_n)_+}\to 0$. Then also
$$
\esp \bb{ (X-X_n)_-} =  \esp\bb{ (X-X_n)_+}  - \esp (X-X_n) \to
0\,,
$$
and so 
$$
\esp |X-X_n| = \esp \bb{ (X-X_n)_+} + \esp \bb{ (X-X_n)_-} \to  0\,.
$$
Hence $X_n\to X$ in probability. 
\end{proof} 

\section{A new family of random sup-measures}\label{sec:RSM}

Recall that for $\beta\in(0,1)$, a $\beta$-stable regenerative set is
the closure of the range of a strictly $\beta$-stable subordinator,
viewed as a random closed set in $\filF(\R_+)$, and it has Hausdorff
dimension 
$\beta$ almost surely; see for  example  \cite{bertoin:1999sub}. 
We need a result on intersections of independent stable
regenerative sets presented below. A number of similar results can be
found in literature, see for example \cite{hawkes:1977},
\cite{fitzsimmons:fristedt:maisonneuve:1095} and \cite{bertoin:1999}.
We could not however find the exact formulation needed, so we included a
short proof. 
\begin{Lem}\label{lem:intersection}
Consider $v_1,v_2\in\R_+$, $v_1\neq v_2$ and
$\beta_1,\beta_2\in(0,1)$. Let $R_1\topp{\beta_1}$ and
$R_2\topp{\beta_2}$ be two independent stable  regenerative sets with
parameter $\beta_1$ and $\beta_2$  respectively. Then, 
\begin{equation} \label{e:intersect.prob}
\proba\pp{\pp{v_1+R_1\topp{\beta_1}}\cap\pp{v_2+R_2\topp{\beta_2}}\neq\emptyset } \in\{0,1\}.
\end{equation} 
The probability equals one, if and only if
$\beta_{1,2}:=\beta_1+\beta_2-1\in(0,1)$, and in this case, the
intersection has the law of a shifted  $\beta_{1,2}$-stable
regenerative set, i.e.~a random element in $\filF(\R_+)$ with a
representation 
$$
V+R\topp{\beta_{1,2}}\,,
$$
where $R\topp{\beta_{1,2}}$ is a $\beta_{1,2}$-stable regenerative set, and $V>\max(v_1,v_2)$ is a random variable independent of $R\topp{\beta_{1,2}}$. 
\end{Lem}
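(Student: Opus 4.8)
The plan is to reduce the dichotomy in \eqref{e:intersect.prob} to a zero-one law and then identify the law of the intersection by exploiting the regenerative (renewal) structure of the stable sets. First I would establish the dichotomy: by stationarity of the increments and the strong Markov property of subordinators, the event that the two shifted sets intersect is, up to obvious modifications, a tail event for the regenerative structure, so its probability is $0$ or $1$. A cleaner route is to invoke the known potential-theoretic criterion for intersection of independent regenerative sets (as in \cite{hawkes:1977} or \cite{fitzsimmons:fristedt:maisonneuve:1095}): two independent stable regenerative sets of indices $\beta_1,\beta_2$ intersect with positive probability if and only if the sum of their dimensions exceeds one, i.e.\ $\beta_1+\beta_2>1$, equivalently $\beta_{1,2}=\beta_1+\beta_2-1>0$. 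Combined with the zero-one law, positive probability forces probability one, giving the stated equivalence; the shifts $v_1,v_2$ are irrelevant to whether the intersection is nonempty because one can translate and use the fact that a stable regenerative set is almost surely unbounded and has $0$ as an accumulation point of itself only at the origin end, so I would check that the overlap of the two shifted copies past $\max(v_1,v_2)$ has the same intersection behavior.

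Next, to identify the law of the intersection I would use the general principle that the intersection of two independent regenerative sets is again a regenerative set, whose associated subordinator has Laplace exponent determined by the two original ones. Concretely, write $R_i\topp{\beta_i}$ as the closed range of a $\beta_i$-stable subordinator; the intersection $(v_1+R_1\topp{\beta_1})\cap(v_2+R_2\topp{\beta_2})$, once it is nonempty, inherits from each factor the self-similarity and the renewal property. I would let $V$ denote the first point of the intersection lying beyond $\max(v_1,v_2)$; by the strong Markov property applied simultaneously to both subordinators at the (possibly randomized) hitting configuration, conditionally on $V$ the set $(\text{intersection})-V$ is independent of $V$ and has the law of the intersection of two independent stable regenerative sets both started afresh from the origin. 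Thus the intersection factorizes as $V+R$ with $V$ independent of $R$, and it remains to identify $R$ as a $\beta_{1,2}$-stable regenerative set.

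The scaling index is pinned down by self-similarity: each $R_i\topp{\beta_i}$ satisfies $cR_i\topp{\beta_i}\eqd R_i\topp{\beta_i}$ in distribution after the appropriate time-change of the subordinator, so the intersection of the two (started from $0$) is invariant in law under the simultaneous scaling $x\mapsto cx$. A self-similar regenerative set is necessarily a stable regenerative set, and its index is forced to be $\beta_{1,2}$ by the potential-theoretic computation: the renewal measure (or $\beta$-capacity) of the intersection is, up to constants, the product of the two individual renewal densities, which behave like $x^{\beta_i-1}$ near $0$, so the intersection's renewal density behaves like $x^{(\beta_1-1)+(\beta_2-1)}=x^{\beta_{1,2}-1}$, identifying the index as $\beta_{1,2}=\beta_1+\beta_2-1$. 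That $V>\max(v_1,v_2)$ strictly holds because, with $v_1\neq v_2$, the smaller shifted set does not contain the larger shift point and the first common point must lie strictly beyond both origins.

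The main obstacle I expect is the rigorous identification step, namely showing that the intersection is genuinely a (shifted) stable regenerative set rather than merely a self-similar random closed set with the correct scaling index. The delicate points are: verifying the regenerative/renewal property of the intersection (which requires applying the strong Markov property jointly to two independent subordinators at a common hitting time, and controlling the event of simultaneous hitting for $\beta_{1,2}$ close to $0$ where intersections are sparse), and passing from ``self-similar regenerative set'' to ``stable regenerative set of index exactly $\beta_{1,2}$''. I would handle the latter by the classification of self-similar regenerative sets, and handle the former by a careful last-exit or first-entrance decomposition; the computation of the correct index via the product of renewal densities is the quantitative heart of the argument and the place where the constraint $\beta_{1,2}\in(0,1)$ (equivalently $\beta_1+\beta_2>1$ for nonemptiness and $\beta_1+\beta_2<2$ automatically) becomes essential.
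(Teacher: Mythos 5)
Your proposal has the right skeleton (a zero--one dichotomy, the criterion $\beta_1+\beta_2>1$, and Hawkes' identification of the intersection of two unshifted stable regenerative sets), but it leaves a genuine gap at exactly the point where the lemma has content. First, the assertion that the event $\{(v_1+R_1\topp{\beta_1})\cap(v_2+R_2\topp{\beta_2})\neq\emptyset\}$ is ``up to obvious modifications a tail event'' is not substantiated and is not obvious: the intersection, if nonempty, can occur at a bounded location, and the unequal shifts destroy both the common starting point and the joint self-similarity that would let you push the event to infinity. Second, and more seriously, the potential-theoretic criterion you invoke (from Hawkes or Fitzsimmons--Fristedt--Maisonneuve) concerns $R_1\topp{\beta_1}\cap R_2\topp{\beta_2}$ with both sets issued from the origin; whether the \emph{shifted} copies with $v_1\neq v_2$ ever meet is precisely what must be proved, and you defer it with ``I would check that the overlap \dots has the same intersection behavior.'' The paper resolves both issues simultaneously with an alternating-overshoot (leapfrog) construction: setting $A_0=v_1-v_2$ and letting $A_{n+1}$ be the overshoot of the running gap by the appropriate subordinator, the strong Markov property gives that the sets intersect iff $\sum_n A_n<\infty$; the scaling identity $B_{x,\beta}\eqd xB_{1,\beta}$ turns this into $\sum_n\prod_{j\le n}C_j<\infty$ with $C_j$ i.i.d., which \emph{is} a tail event and is decided by the sign of $\esp\log C_1$. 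The computation $\esp\log C_1=\varphi(\beta_1)-\varphi(1-\beta_2)$ with $\varphi$ strictly decreasing (via $\varphi'=-\var(\log B_{1,\beta})<0$) is the quantitative heart of the proof, and it also settles the boundary case $\beta_1+\beta_2=1$ (where $\esp\log C_1=0$ and the product does not tend to zero), which your dimension-counting heuristic does not address for shifted sets.

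For the identification step your plan is workable but heavier than necessary: proving from scratch that the intersection is regenerative and self-similar and classifying it amounts to re-deriving Hawkes' theorem, and your definition of $V$ as ``the first point of the intersection'' together with a simultaneous strong Markov property at a non-stopping time needs the same leapfrog decomposition to be made rigorous. The paper instead reads off the representation directly from its construction: $(v+R_1\topp{\beta_1})\cap R_2\topp{\beta_2}\eqd\sum_n A_n+\bigl(R_1\topp{\beta_1}\cap R_2\topp{\beta_2}\bigr)$ with the series independent of the sets, and then cites Hawkes for $R_1\topp{\beta_1}\cap R_2\topp{\beta_2}\eqd R\topp{\beta_1+\beta_2-1}$. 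You should either adopt such an explicit decomposition or supply the missing argument that reduces the shifted problem to the unshifted one.
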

\begin{proof}
We may and will assume that $v_1>v_2=0$, and drop the subscript in
$v_1$. For $x>0$ and $i=1,2$ let 
$B_{x,\beta}$ be the overshoot of the point $x$ by a strictly $\beta$-stable subordinator; 
in particular,
\[
B_{x,\beta_i} \eqd \min\pp{R_i\topp{\beta_i}\cap [x,\infty)}-x, \quad x\ge 0.
\] Define
a sequence of positive random variables $A_0,A_1,\ldots$ by
$A_0=v$, $A_{2n+1}=
B_{A_{2n},\beta_2}^{(2n+1)}$, $n=0,1,2,\ldots$,
$A_{2n}=
B_{A_{2n-1},\beta_1}^{(2n)}$, $n=1,2,\ldots$, where different
superscripts correspond to overshoots by independent
subordinators. Then, by the strong Markov property, the probability of
a nonempty intersection in \eqref{e:intersect.prob} is simply 
\begin{equation} \label{e:sum.An}
\proba\left( \sum_{n=0}^\infty A_n<\infty\right)\,.
\end{equation}
The overshoot $B_{x,\beta}$ has the density given by 
\begin{equation} \label{e:ov.density}
p_B\topp\beta(y\mid x) = \frac1{\Gamma(\beta)\Gamma(1-\beta)}\pp{\frac xy}^\beta\frac1{x+y}, y>0
\end{equation}
(see e.g.~\cite{kyprianou:2006}, Exercise 5.8.) This
implies that $B_{x,\beta}\eid xB_{1,\beta}$ for $x>0$. Grouping the terms
together, we see that the probability in \eqref{e:sum.An} is equal to 
$$
\proba\left( \sum_{n=1}^\infty \prod_{j=1}^n C_j<\infty\right)\,,
$$
where $C_1,C_2,\ldots$ are i.i.d.~random variables with $C_1\eid
B_{1,\beta_1}^{(1)}B_{1,\beta_2}^{(2)}$. An immediate conclusion is that the 
$$
\proba\left( \sum_{n=0}^\infty A_n<\infty\right) = \left\{
\begin{array}{ll}
1 & \text{if} \ \esp\log C_1<0, \\
0 & \text{if} \ \esp\log C_1\geq 0\,.
\end{array} \right.
$$
However, by \eqref{e:ov.density}, after some elementary manipulations
of the integrals, we have, writing $c(\beta) = 1/(\Gamma(\beta)\Gamma(1-\beta))$
\[
\esp\log C_1 =  c(\beta_1) \int_0^\infty \frac{y^{-\beta_1
      }\log y}{1+y}\, dy + c(\beta_2) \int_0^\infty \frac{y^{-\beta_2
      }\log y}{1+y}\, dy  = \varphi(\beta_1)-\varphi(1-\beta_2)
\]
with 
$$
\varphi(\beta) = \pp{\int_0^\infty \frac{y^{-\beta
      }\log y}{1+y}\, dy} \bigg /\pp{\int_{0}^\infty \frac{y^{-\beta}}{1+y}\, dy}.
$$
So if $\beta_2 = 1-\beta_1$, $\esp\log C_1 = 0$, and 
 it is enough to prove that the function $\varphi(\beta)$
is strictly decreasing in $\beta\in (0,1)$. To see this,
\begin{align*}
\varphi^\prime(\beta_1) & = \pp{\int\frac{y^{-\beta_1}}{1+y}\, dy}^{-2}\bb{\left(\int \frac{y^{-\beta_1
      }\log y}{1+y}\, dy\right)^2 - \int \frac{y^{-\beta_1
      }(\log y)^2}{1+y}\, dy \int\frac{y^{-\beta_1
      }}{1+y}\, dy } \\
& = -{\rm Var}\bigl(\log B_{1,\beta_1}\bigr)<0\,.
\end{align*}
This proves \eqref{e:intersect.prob}
together with the criterion for the value of 1. Finally, by the strong
Markov property of the stable regenerative sets, 
if $\beta_1+\beta_2>1$,
then
$$
\pp{v+R_1\topp{\beta_1}}\cap R_2\topp{\beta_2} \eid 
\sum_{n=0}^\infty A_n + \left( R_1\topp{\beta_1}\cap R_2\topp{\beta_2}\right)\,,
$$
where on the right hand side, the series is independent of the stable
regenerative sets. Since it has been shown by \cite{hawkes:1977} that 
$$
R_1\topp{\beta_1}\cap R_2\topp{\beta_2}\eid
R\topp{\beta_1+\beta_2-1}\,,
$$
the proof of the lemma is complete. 
\end{proof}

We now proceed with defining a new class of random sup-measures, by first identifying 
 the underlying random upper-semi-continuous function.  
From now on,  $\beta\in(0,1)$ and $\alpha>0$ are fixed parameters. 
Consider a Poisson  point process on
$\R_+\times\R_+\times \filF(\R_+)$ 
with mean measure 
\[
\alpha u^{-(1+\alpha)}\, du  (1-\beta) v^{-\beta} \, dv\, 
dP_{R\topp\beta},
\]
where $P_{R\topp\beta}$ is the law of  the $\beta$-stable
regenerative set. 
We let
$(U\topp\alpha_j,V_j\topp{\beta}, R\topp\beta_j)_{j\in\N}$ denote a
measurable enumeration of the points of the point process, and
\[
\wt R_j\topp\beta:=V_j\topp\beta+R_j\topp\beta, \, j\in\N
\]
denote the random closed sets $R_j\topp\beta$ shifted by
$V_j\topp\beta$. These are again  random closed sets. 

Introduce the
intersection of such random closed sets with indices from $S\subseteq \N$ by
\equh\label{eq:IS}
 I_S:= \bigcap_{j\in S}\wt R_j\topp{\beta}, S\neq\emptyset \qmand I_\emptyset:= \R_+.
\eque
Let 
\[
\ell_\beta:=\max\ccbb{\ell\in\N:\ell<\frac1{1-\beta}} \in \N. 
\]
By Lemma~\ref{lem:intersection}, we know that 
\equh\label{eq:IS_beta}
I_S \left\{ \begin{array}{ll}
\not=\emptyset & \text{a.s.~if} \ \ |S|\leq \ell_\beta \\
=\emptyset & \text{a.s.~if} \ \ |S|> \ell_\beta.
\end{array}\right.
\eque
Furthermore when $|S|\le \ell_\beta$,  $I_S$ is a randomly shifted stable
regenerative set with parameter $\beta_{|S|}$, where 
\[
\beta_\ell:=\ell\beta - (\ell-1) \in (0,1) \qmfa \ell=1,\dots,\ell_\beta.
\]

Let
\begin{equation} \label{e:usc.pos}
\eta^{\alpha,\beta}(t):=\sum_{j=1}^\infty
U_j\topp\alpha\inddd{t\in\wt R_j\topp\beta}, \, t\in\R_+.
\end{equation} 
Since a stable regenerative set does not hit fixed points,  for every $t$,
$\eta^{\alpha,\beta}(t) = 0$ almost surely.  Furthermore, on an
event of probability 1, every $t$ belongs to at most $\ell_\beta$
different  $\wt R_j\topp\beta$, and thus  $\eta^{\alpha,\beta}(t)$
is well-defined for all $t\in\R_+$.  In order to see that it is, on an
event of probability 1, an upper-semi-continuous function, it is
enough to prove its upper-semi-continuity on $[0,T]$ for every $T\in
(0,\infty)$. Fixing such $T$, we denote by $U\topp\alpha_{(j,T)}$ the $j$th
largest value of $U\topp\alpha_j$ for which $V_j\topp{\beta}\in [0,T]$,
$j=1,2,\ldots$. We write for
$m=1,2,\ldots$,
\begin{eqnarray*}
\eta^{\alpha,\beta}(t)& =&\sum_{j=1}^m
U\topp\alpha_{(j,T)}\inddd{t\in\wt R_j\topp\beta}
+ \sum_{j=m+1}^\infty 
U\topp\alpha_{(j,T)}\inddd{t\in\wt R_j\topp\beta} \\
&=:& \eta_{1,m}^{\alpha,\beta}(t) +\eta_{2,m}^{\alpha,\beta}(t) , \,
t\in [0,T].
\end{eqnarray*}
The random function $\eta_{1,m}^{\alpha,\beta}$ is, for every $m$,
upper-semi-continuous as a finite sum of upper-semi-continuous
functions. Furthermore, on an event of probability 1, 
$$
\sup_{t\in [0,T]} \bigl| \eta_{2,m}^{\alpha,\beta}(t)\bigr| \leq 
\sum_{j=m+1}^{m+\ell_\beta} U\topp\alpha_{(j,T)}\to 0
$$ 
as $m\to\infty$, whence the upper-semi-continuity of
$\eta^{\alpha,\beta}$. 
We define  the random sup-measure corresponding to $\eta^{\alpha,\beta}$ by 
\[
\eta^{\alpha,\beta}(G):=\sup_{t\in G}\eta^{\alpha,\beta}(t), \ G\in
\mathcal G, \ \text{the collection of open subsets of $\R_+$.}
\]
As usually, one may extend, if necessary,  the domain of $\eta^{\alpha,\beta}$ to all
 subsets of $\R_+$. We emphasize that we use the same notation $\eta^{\alpha,\beta}$ for both the random upper-semi-continuous function and the random sup-measure without causing too much ambiguity,  thanks to the homeomorphism between the spaces SM$(\R_+)$ and USC$(\R_+)$. 
 It remains to show the measurability of $\eta^{\alpha,\beta}$. Recall that the sup-vague topology of ${\rm SM} \equiv {\rm SM}(\R_+)$ has sub-bases consisting of
 \[
 \ccbb{m\in {\rm SM}: m(K)<x}, \ccbb{m\in {\rm SM}: m(G)>x} ,
 K\in\calK, G\in\calG, x\in\R_+. 
 \]
See for example \cite{vervaat:1997}, Section 3. Then, for every $x>0$, 
 \[
 \ccbb{\eta^{\alpha,\beta}(K)<x} = \bigcap_{S\subset\N}\pp{\ccbb{\sum_{j\in S}U_j\topp\alpha<x}\cap\ccbb{I_S\cap K\neq\emptyset}}
 \]
 is clearly measurable for $K\in\calK$, and so is $\{\eta^{\alpha,\beta}(G)>x\}$ for $G\in\calG$. The measurability thus follows. 
 
 
\begin{Prop} 
The  random sup-measure $\eta^{\alpha,\beta}$ is stationary and $H$-self-similar with
$H=(1-\beta)/\alpha$. 
\end{Prop}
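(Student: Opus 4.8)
The plan is to deduce both properties from equalities in law of the underlying Poisson point process, using that $\eta^{\alpha,\beta}$ is a fixed measurable functional $\Psi$ of the marked configuration $\{(U_j\topp\alpha,\wt R_j\topp\beta)\}_{j}$, where $\Psi$ sends a collection of pairs (weight, closed set) to the sup-measure $G\mapsto\sup_{t\in G}\sum_j u_j\inddd{t\in F_j}$ (measurability of $\Psi$ is exactly what the construction above verified). For $G\in\calG(\R_+)$ a change of the supremum variable gives $\eta^{\alpha,\beta}(aG)=\sup_{s\in G}\sum_j U_j\topp\alpha\inddd{s\in a^{-1}\wt R_j\topp\beta}$ and $\eta^{\alpha,\beta}(G+b)=\sup_{s\in G}\sum_j U_j\topp\alpha\inddd{s\in\wt R_j\topp\beta-b}$, so $\eta^{\alpha,\beta}(a\,\cdot)$ and $\eta^{\alpha,\beta}(\cdot+b)$ are the images under $\Psi$ of the point processes with atoms $(U_j\topp\alpha,a^{-1}\wt R_j\topp\beta)$ and $(U_j\topp\alpha,(\wt R_j\topp\beta-b)\cap\R_+)$, respectively. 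Since equal-in-law point processes yield equal-in-law sup-measures, it suffices to identify the law of each transformed point process.

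For the self-similarity I would compare $\{(U_j\topp\alpha,a^{-1}V_j\topp\beta,a^{-1}R_j\topp\beta)\}_j$, which produces $\eta^{\alpha,\beta}(a\,\cdot)$ because $a^{-1}\wt R_j\topp\beta=a^{-1}V_j\topp\beta+a^{-1}R_j\topp\beta$, with $\{(a^HU_j\topp\alpha,V_j\topp\beta,R_j\topp\beta)\}_j$, which produces $a^H\eta^{\alpha,\beta}(\cdot)$ because the positive constant $a^H$ passes through both the sum and the supremum. By the mapping theorem it is enough to match their mean measures. Pushing $\alpha u^{-1-\alpha}\,du\,(1-\beta)v^{-\beta}\,dv\,dP_{R\topp\beta}$ forward under $(u,v,F)\mapsto(u,a^{-1}v,a^{-1}F)$ contributes a factor $a^{1-\beta}$ from the $v$-substitution, while the $F$-marginal is unchanged by the scale invariance $a^{-1}R\topp\beta\eid R\topp\beta$ of the stable regenerative set; pushing it forward under $(u,v,F)\mapsto(a^Hu,v,F)$ contributes $a^{H\alpha}=a^{1-\beta}$ from the $u$-substitution. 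The two pushforwards coincide (both equal $a^{1-\beta}$ times the original intensity), giving $\eta^{\alpha,\beta}(a\,\cdot)\eid a^H\eta^{\alpha,\beta}(\cdot)$.

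The stationarity is the substantial step. Splitting the atoms according to $V_j\topp\beta>b$ or $V_j\topp\beta\le b$, an atom of the first kind contributes the shifted set $(V_j\topp\beta-b)+R_j\topp\beta$, a germ $V_j\topp\beta-b>0$ carrying its original regenerative mark. An atom of the second kind has trace $((V_j\topp\beta-b)+R_j\topp\beta)\cap\R_+$ on $\R_+$ equal, by the regeneration (strong Markov) property, to $W_j+R_j'$, where $W_j=B_{b-V_j\topp\beta,\beta}$ is the overshoot of level $b-V_j\topp\beta$ over $0$ and $R_j'$ is a fresh $\beta$-stable regenerative set independent of $W_j$. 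Thus the transformed configuration is again a marked Poisson process with germs in $\R_+$ carrying the weights $U_j\topp\alpha$ and independent marks distributed as $P_{R\topp\beta}$, exactly as for $\{(U_j\topp\alpha,\wt R_j\topp\beta)\}_j$; the one thing to check is that the germ intensity is reproduced. Using the overshoot density $p_B\topp\beta(y\mid x)=[\Gamma(\beta)\Gamma(1-\beta)]^{-1}(x/y)^\beta(x+y)^{-1}$ from Lemma~\ref{lem:intersection}, this amounts to
\[
(1-\beta)(w+b)^{-\beta}+(1-\beta)\int_0^b v^{-\beta}\,p_B\topp\beta(w\mid b-v)\,dv=(1-\beta)w^{-\beta},\quad w>0,
\]
the first term from the atoms with $V_j\topp\beta>b$ and the integral from those with $V_j\topp\beta\le b$. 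After cancelling $(1-\beta)$ and multiplying by $w^\beta$ this is the elementary identity $(w/(w+b))^\beta+[\Gamma(\beta)\Gamma(1-\beta)]^{-1}\int_0^bv^{-\beta}(b-v)^\beta((b-v)+w)^{-1}\,dv=1$ (immediate as $w\downarrow0$ and $w\to\infty$, and in general a Beta-integral computation). Granting it, the germ intensity $(1-\beta)w^{-\beta}\,dw$ is recovered, the two point processes agree in law, and $\eta^{\alpha,\beta}(\cdot+b)\eid\eta^{\alpha,\beta}(\cdot)$.

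The main obstacle lies entirely in the stationarity argument: besides the overshoot integral identity, one must justify carefully that the shift-and-restrict operation turns the Poisson process of shifted regenerative sets into a Poisson process with the claimed germ-and-mark intensity — that is, that regeneration yields marks genuinely independent of the overshoot germs and that the marking/mapping theorem applies to the resulting deterministic re-description of each atom (every shifted set meets $\R_+$ since regenerative sets are a.s.\ unbounded, so no atoms are lost). Self-similarity, by contrast, is a one-line change of variables once the scale invariance $a^{-1}R\topp\beta\eid R\topp\beta$ and the relation $H\alpha=1-\beta$ are used.
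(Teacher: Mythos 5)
Your proof is correct, and it reaches the same two reductions as the paper — both properties are deduced from equality in law of the underlying Poisson point processes, checked at the level of mean measures — but it differs in one substantive way: the paper delegates the key invariance facts to Proposition 4.1 (b) and (c) of \cite{lacaux:samorodnitsky:2016} (invariance of the mean measure under the scaling map and under the shift-and-restrict map $G_r(F)=F\cap[r,\infty)-r$), whereas you prove them from scratch. For self-similarity the two computations are essentially identical (the change of variables in $u$ and $v$ plus the scale invariance $a^{-1}R\topp\beta\eid R\topp\beta$, with $H\alpha=1-\beta$). For stationarity you supply the content of the cited lemma: splitting atoms according to $V_j\topp\beta>b$ or $\le b$, invoking the strong Markov property to replace the trace on $\R_+$ by an overshoot germ with a fresh regenerative mark, and verifying the germ-intensity identity
\[
\pp{\frac{w}{w+b}}^{\beta}+\frac1{\Gamma(\beta)\Gamma(1-\beta)}\int_0^b v^{-\beta}(b-v)^{\beta}\frac{dv}{b-v+w}=1 ,
\]
which is indeed a correct Beta-type integral (it reduces, after the substitution $t=u/(1+u)$, to $\int_0^\infty u^\beta[(u+\tfrac{c}{1+c})^{-1}-(u+1)^{-1}]\,du$ with $c=w/b$, evaluated by the standard $\pi/\sin\pi\beta$ formula). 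What your route buys is a self-contained argument that makes visible exactly why the measure $(1-\beta)v^{-\beta}dv\otimes P_{R\topp\beta}$ is the right shift-invariant germ--mark intensity; what it costs is that you must (as you note) carefully justify the displacement/marking theorem for the randomized per-atom transformation, a step the citation absorbs. Both are complete proofs.
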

\begin{proof}
To prove the stationarity of $\eta^{\alpha,\beta}$ as a random sup-measure it is enough to
prove that the random upper-semi-continuous function $\eta^{\alpha,\beta}$ defined
in~\eqref{e:usc.pos} has a shift-invariant law. Let $r>0$ and consider
the upper-semi-continuous function $( \eta^{\alpha,\beta}(t+r))_{t\in\R_+}$. Note
that 
\[
\eta^{\alpha,\beta}(t+r)  = \sum_{j=1}^\infty
U_j\topp\alpha\inddd{t+r\in\wt R_j\topp\beta} 
 = \sum_{j=1}^\infty
U_j\topp\alpha\inddd{t\in G_r\pp{\wt R_j\topp\beta}}, \, t\in \R_+,
\]
where $G_r$ is a map from ${\mathcal F(\R_+)}$ to ${\mathcal F(\R_+)} $, defined by 
$$
G_r(F) :=F\cap [r,\infty)-r\,.
$$
However, by Proposition 4.1 (c) in \cite{lacaux:samorodnitsky:2016}, the
map 
$$
(x,F)\to \bigl(x,G_r(F)\bigr) 
$$
on $\R_+ \times {\mathcal F(\R_+)}$ leaves the mean measure of
the Poisson random measure determined by
$(U_j\topp\alpha,   \wt R\topp\beta_j)_{j\in\N}$ 
unaffected. Hence, the law of the random upper-semi-continuous function $\bigl(\eta^{\alpha,\beta}(t+r)\bigr)_{t\in \R_+}$ coincides with that of $\bigl(\eta^{\alpha,\beta}(t)\bigr)_{t\in \R_+}$.

Similarly, in order to prove the $H$-self-similarity of
$\eta^{\alpha,\beta}$ as a random sup-measure it is enough to 
prove that the random upper-semi-continuous function $\eta^{\alpha,\beta}$ defined
in~\eqref{e:usc.pos}   is $H$-self-similar. To this end, let $a>0$, and
note that by Proposition 4.1 (b) in \cite{lacaux:samorodnitsky:2016}
\begin{align*}
\eta^{\alpha,\beta}(at)  & =  \sum_{j=1}^\infty
U_j\topp\alpha\inddd{at\in\wt R_j\topp\beta}  =  \sum_{j=1}^\infty
U_j\topp\alpha  \inddd{t\in  a^{-1}  V_j\topp\beta + a^{-1} R_j\topp\beta} \\
& \eid a^{(1-\beta)/\alpha} \sum_{j=1}^\infty
U_j\topp\alpha\inddd{t\in\wt R_j\topp\beta}
\end{align*}
jointly in $t$. 
Therefore, $\bigl( \eta^{\alpha,\beta}(at)\bigr)_{t\in \R_+}$ and  $\bigl( a^{(1-\beta)/\alpha} \eta^{\alpha,\beta}(t)\bigr)_{t\in \R_+}$ have the same law as the random upper-semi-continuous  functions.  
\end{proof}
If we restrict the random upper-semi-continuous functions and
random measures above to a compact interval, we can use a
particularly convenient  measurable enumeration of the points of the
Poisson process. Suppose, for simplicity, that the compact interval in
question is the unit interval $[0,1]$. The Poisson random
 measure $(U_j\topp\alpha,V_j\topp{\beta}, R\topp\beta_j)_{j\in\N}$ 
 restricted to $\R_+\times [0,1]\times \filF(\R_+)$ can then be
 viewed as a Poisson point process $(U_j\topp\alpha)_{j\in\N}$ on $\R_+$ with
 mean measure $\alpha u^{-(1+\alpha)}\, du$ marked by two independent
 sequences $(V_j\topp{\beta})_{j\in\N}$ and $(
 R\topp\beta_j)_{j\in\N}$  of i.i.d.~random variables. The sequence $(
 R\topp\beta_j)_{j\in\N}$ is as before, while
 $(V_j\topp{\beta})_{j\in\N}$ is a sequence of random variables
 taking values in $[0,1]$ with the common law $\proba(V_j\topp{\beta}\le v)
 = v^{1-\beta}, v\in[0,1]$.  Furthermore, we can enumerate the points of so-obtained
 Poisson random measure according to the decreasing value of the first
 coordinate, and express $(U_j\topp\alpha)_{j\in\N}$ as $(\Gamma_j^{-1/\alpha})_{j\in\N}$ with  $(\Gamma_j)_{j\in\N}$ denoting the arrival times of the
 unit rate Poisson process on $(0,\infty)$. This leads to the following  representation
\begin{equation} \label{e:usc.spec}
\pp{\eta^{\alpha,\beta}(t)}_{t\in[0,1]}\eqd\pp{\sum_{j=1}^\infty
\Gamma_j^{-1/\alpha}\inddd{t\in\wt R_j\topp\beta}}_{t\in[0,1]}.
\end{equation} 

\medskip

To conclude this section we 
would like to draw the attention of the reader to the fact that for
every fixed $\alpha\in(0,\infty)$, the family of random sup-measures
$(\eta^{\alpha,\beta})_{\beta\in(0,1)}$  interpolates certain familiar
random sup-measures. 
On one hand, as $\beta\downarrow0$, the limit is well known and
simple. To see this, notice first that for
$(U\topp\alpha_j,V\topp\beta_j,R\topp\beta_j)_{j\in\N}$  representing
the Poisson random measure on $\R_+\times\R_+\times\filF(\R_+)$ with
mean measure $\alpha
u^{-(1+\alpha)}(1-\beta)v^{-\beta}dvdP_{R\topp\beta}$, one can extend
the range of parameters to include $\beta = 0$ by setting  
$P_{R\topp0}:=\delta_{\{0\}}$ as a probability distribution (unit point mass at $\{0\}$) on
$(\filF(\R_+),\calB(\filF(\R_+))$. This is natural as
$R\topp\beta\weakto \{0\}$ in $\filF(\R_+)$ as $\beta\downarrow0$,
which follows, for example, from \cite{kyprianou:2006}, Exercise 5.8
(the ``zero-stable subordinator'' can be thought of as a process  staying an exponentially distributed amount of time at zero and then ``jumping to
infinity''.) 
It then follows that
\[
\eta^{\alpha,\beta}(\cdot)\weakto \eta^{\alpha,0}(\cdot) := \bigvee_{j=1}^\infty U\topp\alpha_j\inddd{V_j\topp0\cap\cdot\neq\emptyset}
\]
as $\beta\downarrow0$. 

The random sup-measure $\eta^{\alpha,0}$ above is the independently
scattered (a.k.a.~completely random) $\alpha$-Fr\'echet max-stable
random sup-measure on $\R_+$ with Lebesgue measure as the control
measure (see \cite{stoev:taqqu:2005} and
\cite{molchanov:strokorb:2016}). Furthermore,
$(\eta^{\alpha,\beta}([0,t]))_{t\ge0}$ corresponds to the extremal
process $(Z_\alpha(t))_{t\ge0}$ in~\eqref{eq:FELT} for a sequence of
i.i.d.~random variables with tail index $\alpha$.  The extremal
process $Z_\alpha$ also belongs to the class of $\alpha$-Fr\'echet
max-stable processes (see
e.g.~\cite{dehaan:1984},~\cite{kabluchko:2009}). 

In the range $\beta\in(0,1/2]$, the structure of $\eta^{\alpha,\beta}$
can also be simplified. As there are no intersections among independent
shifted $\beta$-stable regenerative sets,  the random sup-measure on
the positive real line becomes 
\[
 \eta^{\alpha,\beta}(\cdot) = \bigvee_{j=1}^\infty{ U\topp\alpha_j\inddd{\wt R_j\topp\beta\cap \cdot\neq \emptyset}}, \quad \beta\in(0,1/2].
\]
This random sup-measure was first studied in
\cite{lacaux:samorodnitsky:2016}. This is an $\alpha$-Fr\'echet
max-stable random sup-measure, belonging to the class
of the
so-called Choquet random sup-measures introduced in
\cite{molchanov:strokorb:2016}. It is also known that for
$\beta\in(0,1/2]$,  $(\eta^{\alpha,\beta}([0,t]))_{t\ge0}$ has the
same distribution as the time-changed extremal process
$(Z_\alpha(t^{1-\beta}))_{t\ge0}$; see
\cite{owada:samorodnitsky:2015a} and
\cite{lacaux:samorodnitsky:2016}.

On the other hand, as soon as $\beta>1/2$, the random sup-measure $\eta^{\alpha,\beta}$ is no longer an $\alpha$-Fr\'echet random sup-measure, due to the appearance of intersections.  
As $\beta\uparrow 1$, the sets $\wt R\topp\beta$ become larger and
larger in terms of Hausdorff dimension, and more and more
$U\topp\alpha_j$s enter the sums defining the random measure due to
intersections of more and more $\wt R_j\topp\beta$. In the limit, $\wt
R\topp\beta\weakto [0,\infty)$ in $\filF(\R_+)$ as $\beta\uparrow1$
(the ``one-stable subordinator'' is just the straight line).
In the limit, therefore, all $U_j\topp\alpha$s contribute to the sum 
determining the random sup-measure, but for the infinite sum to be
finite, restricting ourselves to the case
$\alpha\in(0,1)$ is necessary. In this case we have 
\[
\eta^{\alpha,\beta}(\cdot)  \weakto \eta^{\alpha,1}(\cdot) :=\pp{\sif j1 U_j\topp\alpha}\inddd{\cdot\cap\R_+\neq\emptyset}
\]
as $\beta\uparrow 1$. In words, the limit is a random sup-measure with {\em complete dependence} that takes the same value $\sif j1 U\topp\alpha_j$ on every open interval. Note that this random series follows the totally skewed $\alpha$-stable distribution.

In particular, for every $\alpha\in(0,1)$, the distributions of random
variables $(\eta^{\alpha,\beta}((0,1)))_{\beta\in[0,1]}$ interpolate
between the $\alpha$-Fr\'echet distribution ($\beta = 0$) and the
totally skewed $\alpha$-stable distribution ($\beta = 1$). These
distributions, to the best of our knowledge, have not been described
before. 
Their properties will be the subject of future
investigations. 
See \citet{simon14comparing} for a recent result on comparison between totally skewed stable and Fr\'echet distributions.

The tail behaviour of   $\eta^{\alpha,\beta}((0,1))$
is, however, clear, and it is described in the following simple
result.  
\begin{Prop}
For all $\alpha\in(0,\infty)$, $\beta\in(0,1)$,
\[
x^\alpha \proba\pp{\eta^{\alpha,\beta}((0,1))>x} \to 1
\]
as $x\to\infty$. 
\end{Prop}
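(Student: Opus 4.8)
The plan is to prove matching upper and lower bounds for the tail of $M$, where by~\eqref{e:usc.spec} the random variable $M:=\sup_{t\in(0,1)}\sum_{j=1}^\infty\Gamma_j^{-1/\alpha}\indd{t\in\wt R_j\topp\beta}$ satisfies $\eta^{\alpha,\beta}((0,1))\eqd M$; here $(\Gamma_j)_{j\in\N}$ are the arrival times of a unit rate Poisson process, independent of the marks $(V_j\topp\beta,R_j\topp\beta)_{j\in\N}$. The lower bound is immediate: since each $\wt R_j\topp\beta$ contains its left endpoint $V_j\topp\beta\in(0,1)$ almost surely, evaluating the sum at $t=V_1\topp\beta$ gives $M\ge\max_j\Gamma_j^{-1/\alpha}=\Gamma_1^{-1/\alpha}$, and from $\proba(\Gamma_1^{-1/\alpha}>x)=1-e^{-x^{-\alpha}}\sim x^{-\alpha}$ I obtain $\liminf_{x\to\infty}x^\alpha\proba(M>x)\ge1$. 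Everything then reduces to the matching upper bound $\limsup_{x\to\infty}x^\alpha\proba(M>x)\le1$.

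For the upper bound I would fix $\epsilon\in(0,1)$ and peel off the largest atom,
\[
\proba(M>x)\le\proba\pp{\Gamma_1^{-1/\alpha}>(1-\epsilon)x}+\proba\pp{M>x,\ \Gamma_1^{-1/\alpha}\le(1-\epsilon)x},
\]
so that the first term contributes $(1-\epsilon)^{-\alpha}x^{-\alpha}(1+o(1))$ and the whole task becomes showing the second term is $o(x^{-\alpha})$ for each fixed $\epsilon$; letting $\epsilon\downarrow0$ afterwards produces the clean constant $1$.

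The crux is a first-moment bound on overlaps. On the event in the second term there is some $t\in(0,1)$ with $\sum_{j:\,t\in\wt R_j\topp\beta}U_j\topp\alpha>x$; by the multiplicity bound recorded around~\eqref{eq:IS_beta} this sum has at most $\ell_\beta$ summands, each at most $(1-\epsilon)x$. An elementary pigeonhole count then forces at least two summands to exceed $\delta x$ with $\delta:=\epsilon/\ell_\beta$, so the event is contained in
\[
\ccbb{\exists\,i\neq j:\ U_i\topp\alpha,U_j\topp\alpha>\delta x,\ \wt R_i\topp\beta\cap\wt R_j\topp\beta\cap(0,1)\neq\emptyset}.
\]
(When $\ell_\beta=1$, i.e.\ $\beta\le1/2$, no point meets two sets and this event is empty, recovering the Fr\'echet case directly.) The number of atoms with $U\topp\alpha>\delta x$ is Poisson with mean $(\delta x)^{-\alpha}$, and conditionally on this number the marks are i.i.d.; writing $p:=\proba(\wt R_1\topp\beta\cap\wt R_2\topp\beta\cap(0,1)\neq\emptyset)\le1$ for two independent marks, Markov's inequality together with the second factorial moment of the Poisson law bounds the second term by $\tfrac{p}{2}(\delta x)^{-2\alpha}=O(x^{-2\alpha})$, which is $o(x^{-\alpha})$ since $\alpha>0$.

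Combining these estimates yields $\limsup_{x\to\infty}x^\alpha\proba(M>x)\le(1-\epsilon)^{-\alpha}$ for every $\epsilon\in(0,1)$, and $\epsilon\downarrow0$ finishes the argument. The hard part is the overlap step: one must be certain that the tail mass produced by several regenerative sets stacking at a common point is genuinely of the smaller order $x^{-2\alpha}$. This is exactly where the finiteness of the multiplicity $\ell_\beta$ from~\eqref{eq:IS_beta} is essential, since it converts ``$M>x$ without a dominant atom'' into ``two independent atoms above a level of order $x$'', an event of probability $O(x^{-2\alpha})$. The remaining pieces — the precise Poisson moment, the trivial bound $p\le1$, and the pigeonhole count — are routine.
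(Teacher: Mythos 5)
Your argument is correct and follows essentially the same route as the paper: the lower bound is the same ($M\ge\Gamma_1^{-1/\alpha}$ because the top atom's set meets $(0,1)$ a.s.), and your upper bound — pigeonholing over the at most $\ell_\beta$ overlapping sets to force a second atom above $\delta x$, then bounding $\proba(N\ge 2)$ by the Poisson second factorial moment — is just a repackaging of the paper's sandwich $M\le\Gamma_1^{-1/\alpha}+(\ell_\beta-1)\Gamma_2^{-1/\alpha}$ combined with the observation that $\Gamma_2^{-1/\alpha}$ has a strictly lighter tail (indeed $\{N_{\delta x}\ge 2\}=\{\Gamma_2^{-1/\alpha}>\delta x\}$). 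Both proofs hinge on the same key fact, the finiteness of the multiplicity $\ell_\beta$ from \eqref{eq:IS_beta}, so no further comparison is needed.
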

\begin{proof}
Consider the representation \eqref{e:usc.spec}. 
Since $\proba(\wt R\topp \beta\cap(0,1)\neq\emptyset) = 1$,
with probability one 
\[
\Gamma_1^{-1/\alpha} \le\eta^{\alpha,\beta}((0,1))\le {\Gamma_1^{-1/\alpha}+(\ell_\beta-1)\Gamma_2^{-1/\alpha}}.
\]
Note that $\proba(\Gamma_1^{-1/\alpha}>x) \sim x^{-\alpha}$ as
$x\to\infty$, and that for $\delta\in(0,\alpha)$,  
\[
\proba\pp{\Gamma_2^{-1/\alpha}>x}\le \frac{\esp\Gamma_2^{-(\alpha+\delta)/\alpha}}{x^{\alpha+\delta}}= \frac{\Gamma(1-\delta/\alpha)}{x^{\alpha+\delta}}, \quad x>0,
\]
where $\Gamma(x)$ is the Gamma function. Hence the result. 
\end{proof}

As we shall see below,  for each $\alpha,\beta$ the random sup-measure
$\eta^{\alpha,\beta}$ arises in the limit of the extremes of stationary
processes: while $\alpha$ indicates the tail behavior, $\beta$
indicates the length of memory. The limiting case $\beta=0$
corresponds to the  short memory case already extensively
investigated in the literature, and the case $\beta\in(0,1)$
corresponds to the long range dependence regime. The larger the
$\beta$ is, the longer the memory becomes.

\section{A family of stationary \id\  processes}\label{sec:id}
We consider a discrete-time stationary symmetric \id\ process whose function
space L\'evy measure is based
on an underlying null-recurrent Markov chain. Similar models have been
investigated in  the symmetric $\alpha$-stable (S$\alpha$S)  case
in~\cite{resnick:samorodnitsky:xue:2000},
\cite{samorodnitsky:2004a}
\cite{owada:samorodnitsky:2015,owada:samorodnitsky:2015a},
\cite{owada:2016} and \cite{lacaux:samorodnitsky:2016}, which can be
consulted for various background facts stated below. 
We first describe the Markov chain. 
Consider an irreducible aperiodic
null-recurrent Markov chain $(Y_n)_{n\in\N_0}$ on $\Z$ with $\N_0 =
\{0\}\cup\N$.  Fix a state $i_0$, and let $(\pi_i)_{i\in\Z}$ be the
unique invariant measure on $\Z$ such that $\pi_{i_0} = 1$.  
Consider the space $(E,\calE) = (\Z^{\N_0},\calB(\Z^{\N_0}))$. We denote each element of $E$ by $x \equiv (x_0,x_1,\dots)$. Let $P_i$ denote the probability measure on $(E,\calE)$ determined by the Markov chain starting at $Y_0 = i$, and introduce an infinite $\sigma$-finite measure on $(E,\calE)$ defined by
\[
\mu(B):= \sum_{i\in\Z}\pi_iP_i(B), \, B\in\calE.
\]
Consider 
\[
A_0:= \{x\in E: x_0 = i_0\},
\] and the first entrance time of $A_0$
\[
\varphi_{A_0}(x):=\inf\{n\in\N: x_n = i_0\}, \, x\in E.
\]
The key assumption is that, for some $\beta\in(0,1)$ and a slowly
varying function $L$, 
\equh\label{eq:F}
\wb F(n)\equiv P_{i_0}(\varphi_{A_0}>n) = n^{-\beta}L(n)\,.
\eque 
This assumption can also be expressed
in terms of the so-called {\em wandering rate sequence} defined by 
\[
w_n:=\mu\pp{\bigcup_{k=0}^{n-1}\ccbb{x\in E: x_k = i_0}}, n\in\N.
\]
Then 
\[
w_n\sim \mu(\varphi_{A_0}\le n) \sim \summ k1n
P_{i_0}(\varphi_{A_0}\ge k), 
\]
and the key assumption  becomes $w_n\in
RV_{1-\beta}$. Here and in the sequel, 
$RV_{-\alpha}$ stands for the family of functions on $\N_0$ that are 
regularly varying at
infinity with index $-\alpha$. For technical reasons we will assume,
additionally, that 
with $p_n := \proba(Y_1 = n)$, 
\equh\label{eq:Doney}
\sup_{n\in\N}\frac{np_n}{\wb F(n)}<\infty.
\eque

If 
$T$ denotes the shift operator $T(x_0,x_1,\dots) =
(x_1,x_2,\dots)$, then $\mu$ is $T$-invariant: $\mu(\cdot) = \mu(
T^{-1}\cdot)$ on $(E,\calE)$. Furthermore,  $T$ is conservative and ergodic
with respect to $\mu$ on $(E,\calE)$.  
Next, we shall consider non-negative functions from $L^\infty(\mu)$
supported by ${A_0}$. Fix $\alpha>0$. For a fixed $f\in L^\infty(\mu)$, write 
\begin{equation} \label{e:b.n}
b_n:=\pp{\int\max_{k=0,\dots,n}\pp{f\circ T^k(x)}^\alpha\mu(dx)}^{1/\alpha}, n\in\N.
\end{equation}
The sequence $(b_n)$ satisfies 
\begin{equation} \label{e:bn.wn}
\limn\frac{b_n^\alpha}{w_n} = \nn f_\infty. 
\end{equation}
Given a Markov chain as above and $f\in L^\infty(\mu)$ supported by ${A_0}$,
we define a stationary symmetric \id\ process as a 
stochastic integral 
\begin{equation} \label{e:the.process}
X_n:= \int_E f_n(x)M(dx) \qmwith f_n := f\circ T^n, \, n\in\N_0,
\end{equation} 
where $M$ is a homogeneous symmetric \id\ random measure on $(E,\calE)$
with control measure  $\mu$ and a local L\'evy measure $\rho$, symmetric and 
satisfying 
\begin{equation} \label{e:local.power}
\rho \bigl( (z,\infty)\bigr) = az^{-\alpha} \ \ \text{for $z\geq
  z_0>0$.}
\end{equation}
We refer the reader to Chapter 3 in \cite{samorodnitsky:2016} for more
details on integrals with respect to \id\ random measures and, in
particular, for the fact that the stochastic process in
\eqref{e:the.process}  is a well-defined stationary \id\ process (Theorem 3.6.6 therein). In particular, this
process satisfies 
$$
\proba(X_0>x)\sim { a\| f\|_\alpha^\alpha \, x^{-\alpha}}
$$
as $x\to\infty$; see \cite{rosinski:samorodnitsky:1993}. We will use the value of
$\alpha$ defined by \eqref{e:local.power}   in \eqref{e:b.n}. Below we
will  work with a more explicit and helpful series
representation,~\eqref{eq:series}  of the processes of interest.

We would like to draw the attention of the reader to the fact that we
are assuming in \eqref{e:local.power} that the tail of the
local L\'evy measure has, after a certain point, exact power-law
behavior. This is done purely for clarity of the presentation. There
is no doubt whatsoever that limiting results similar to the one we
prove in the next section hold under a much more general assumption of
the regular variation of the tail of $\rho$. However, the analysis in
this case will involve additional layers of approximation that might
obscure the nature of the new limiting process we will obtain (note,
however, that the assumption \eqref{e:local.power} already covers the
S$\alpha$S case when $\alpha\in(0,2)$.)
In a similar vein, for the sake of clarity, we will assume in the next
section that $f$ is simply the indicator function of the set ${A_0}$.

Other types of limit theorems for this and related class of processes
have been investigated for the partial sums (by
\cite{owada:samorodnitsky:2015,jung17functional}) and for
the sample covariance functions (by
\cite{resnick:samorodnitsky:xue:2000,owada:2016}). In all cases
non-standard normalizations, or even new limit processes, show up in
the limit theorems, indicating long range dependence
in the model.  Properties of stationary infinitely divisble
processes have  intrinsic connections to infinite ergodic theory
(see
\cite{rosinski:1995,samorodnitsky:2005,kabluchko:stoev:2016}), and the
family of processes we are considering are said to be {\em
  driven by a null-recurrent flow}. The mixing properties of such
processes  (in the
S$\alpha$S case with $\alpha\in(0,2)$) were investigated in
\cite{rosinski:samorodnitsky:1996}. 

\section{A limit theorem for stationary \id\ processes}\label{sec:limit}
Consider the  stationary \id\ process  introduced in
\eqref{e:the.process}. For $n=1,2,\ldots $ we define a   random
sup-measure by 
\[
M_n(B):= \max_{k\in nB} X_k, \, B\subset[0,\infty)\,.
\]
The main result of this paper is the following theorem. 
\begin{Thm}\label{thm:SM}
Consider the stationary infinitely divisible process
$(X_n)_{n\in\N_0}$ defined in the previous section. Let $f={\bf
  1}_{A_0}$ with ${A_0} = \{x\in E:x_0 = i_0\}$.   Under the
assumptions 
\eqref{eq:F} and \eqref{eq:Doney} and with $b_n$ as in \eqref{e:b.n}, 
 \[
\frac1{b_n} M_n \weakto  a^{1/\alpha} \eta^{\alpha,\beta}
 \] as $n\to\infty$ in the space  $\SM(\R_+)$\,, 
where  $a$ is as in \eqref{e:local.power}.
 \end{Thm}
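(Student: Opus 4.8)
The plan is to establish the convergence in $\SM(\R_+)$ through the finite-dimensional criterion of \cite{obrien:torfs:vervaat:1990}, Theorem 3.2: it suffices to prove
\[
\pp{\frac1{b_n}M_n(I_1),\dots,\frac1{b_n}M_n(I_m)}\weakto a^{1/\alpha}\pp{\eta^{\alpha,\beta}(I_1),\dots,\eta^{\alpha,\beta}(I_m)}
\]
for every finite family of open $\eta^{\alpha,\beta}$-continuity intervals $I_1,\dots,I_m$. By the stationarity and $H$-self-similarity of $\eta^{\alpha,\beta}$ already established, it is enough to work on a fixed bounded horizon, and I would first pass to the series representation~\eqref{eq:series} of the restricted process. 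With $f=\ind_{A_0}$ this represents $(X_k)_{0\le k\le nt}$ in distribution as a sum over the points of a Poisson process, where each point carries a symmetric heavy-tailed jump and an independent trajectory of the Markov chain, and contributes its jump to $X_k$ at exactly those times $k$ at which the trajectory sits in the state $i_0$. Thus a single point with positive jump $u$ and trajectory $Y$ contributes $u$ to $M_n(B)$ precisely when the \emph{visit set} $\{k\le nt:Y_k=i_0\}$ meets $nB$. Since $\rho((z,\infty))=az^{-\alpha}$ by~\eqref{e:local.power} and $b_n^\alpha\sim w_n$ by~\eqref{e:bn.wn}, the normalized jumps of the largest points converge to $a^{1/\alpha}\Gamma_j^{-1/\alpha}$, matching the representation~\eqref{e:usc.spec} of $\eta^{\alpha,\beta}$ up to the factor $a^{1/\alpha}$.

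The geometric heart of the argument is that the normalized visit set $\tfrac1n\{k\le nt:Y_k=i_0\}$ of each retained point converges, as a random closed set in $\filF(\R_+)$, to a shifted $\beta$-stable regenerative set $\wt R\topp\beta=V\topp\beta+R\topp\beta$. The shift $V\topp\beta$, with $\proba(V\topp\beta\le v)=v^{1-\beta}$, arises as the limit of the rescaled first entrance time into $A_0$ under the normalized restriction of $\mu$; indeed $\mu(\varphi_{A_0}\le vn)/w_n\to v^{1-\beta}$ by the regular variation~\eqref{eq:F}, and~\eqref{eq:F} together with the regularity condition~\eqref{eq:Doney} drives the subsequent return-time block structure to $R\topp\beta$. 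As distinct Poisson points carry independent trajectories, finitely many retained visit sets converge jointly to independent copies of $\wt R\topp\beta$. The new feature for $\beta>1/2$ is that these limiting sets intersect, producing genuine shot-noise sums, and to capture them I would invoke Theorem~\ref{thm:joint_closed_sets}(ii), which upgrades marginal convergence of each intersection to joint convergence of the whole family of intersections. Its hypothesis~\eqref{eq:marginal_A} is exactly the statement that the intersection of $|S|$ rescaled visit sets converges to the intersection of the corresponding independent shifted regenerative sets; by Lemma~\ref{lem:intersection} the latter is almost surely empty when $|S|>\ell_\beta$ and a shifted $\beta_{|S|}$-stable regenerative set otherwise, identifying the limits with the sets $I_S$ of~\eqref{eq:IS}.

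With the joint convergence of the retained jump sizes and of the entire family of intersection sets in hand, I would read off the finite-dimensional convergence by truncation. Keeping only the $N$ largest positive jumps, the restricted maximum is
\[
\frac1{b_n}M_n(I)\approx \max\ccbb{\sum_{j\in S}a^{1/\alpha}\Gamma_j^{-1/\alpha}: S\subseteq\{1,\dots,N\},\ A_S(n)\cap nI\neq\emptyset},
\]
where $A_S(n)$ is the intersection of the rescaled visit sets indexed by $S$; this is precisely the functional $\max\{\sum_{j\in S}U\topp\alpha_j:I_S\cap I\neq\emptyset\}$ representing the $N$-truncation of $a^{1/\alpha}\eta^{\alpha,\beta}(I)$, exactly as in the measurability computation for $\{\eta^{\alpha,\beta}(K)<x\}$ in Section~\ref{sec:RSM}. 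Because $I$ is an $\eta^{\alpha,\beta}$-continuity interval, the events $\{I_S\cap I\neq\emptyset\}$ are continuity events of the limit, so the continuous-mapping theorem applied to Theorem~\ref{thm:joint_closed_sets}(ii) yields convergence of the truncated maxima jointly over $I_1,\dots,I_m$. Letting $N\to\infty$ and bounding the discarded contribution uniformly — at most $\ell_\beta$ sets overlap at any point, so the analogue of $\sum_{j=N+1}^{N+\ell_\beta}U\topp\alpha_{(j,T)}\to0$ from Section~\ref{sec:RSM} applies — completes the passage to the limit; the symmetry of $M$ is irrelevant to the maxima, as negative jumps only decrease $X_k$.

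The hardest step will be verifying hypothesis~\eqref{eq:marginal_A} for $|S|\ge2$ in the regime $\beta\in(1/2,1)$. For a single trajectory the convergence of the return-time set to a $\beta$-stable regenerative set is classical, but for two or more independent copies of the chain one must show that the rescaled set of \emph{simultaneous} visits to $i_0$ converges to the intersection of independent regenerative sets. This demands control of the joint renewal structure of several independent chains near their common recurrent state — in particular that the simultaneous returns live on the correct $n^{\beta_{|S|}}$ scale (consistent with $\beta_{|S|}=|S|\beta-(|S|-1)$ being positive exactly when $|S|\le\ell_\beta$) and that no mass escapes in the limit. It is here that the long-memory range $\beta\in(1/2,1)$, absent from all previous treatments, genuinely enters.
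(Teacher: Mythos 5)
Your overall architecture coincides with the paper's: reduce to finite-dimensional convergence over continuity intervals, pass to the series representation \eqref{eq:series}, obtain joint convergence of the rescaled simultaneous-visit sets to the sets $I_S$ via Theorem~\ref{thm:joint_closed_sets}(ii), and conclude by truncation. Two of the steps you treat as routine, however, are exactly where the work lies, and one of your justifications would fail as stated. The bound you propose for the discarded contribution after truncation is a property of the limit, not of the pre-limit: ``at most $\ell_\beta$ sets overlap at any point'' holds almost surely for the limiting regenerative sets, but at finite $n$ each retained chain visits $i_0$ at a fixed time $k$ with probability $b_n^{-\alpha}>0$ independently of the others, so arbitrarily many discrete visit sets can meet at a single integer time and the tail of the shot-noise sum is \emph{not} dominated by $\sum_{j=N+1}^{N+\ell_\beta}U\topp\alpha_{(j,T)}$. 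The paper's Proposition~\ref{prop:SM_remainder} replaces this by a union bound over the $n+1$ time points together with a proof that the probability that the tail sum at a single time exceeds $\delta$ is $o(n^{-1})$, via a Poisson-random-measure reduction and a $p$-th moment estimate with $p>1/(1-\beta)$ so that $b_n^{-p}=o(n^{-1})$. Some uniform-in-$k$ argument of this kind is indispensable and is absent from your proposal.

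Second, you correctly identify the verification of \eqref{eq:marginal_A} for $|S|\ge 2$ as the hardest step, but you leave it entirely open, and in the regime $\beta>1/2$ it is the mathematical core of the theorem. The paper's Theorem~\ref{thm:joint} proves it by viewing the simultaneous returns of $|S|$ independent chains as an intersection renewal process: the condition \eqref{eq:Doney} upgrades the regular variation of the renewal measure to that of the renewal sequence $u(n)$ (this is where Doney's theorem enters), so that $u^*(n)=\prod_q u\topp q(n)$ is regularly varying of index $\beta^*_\ell-1$ (Lemma~\ref{lem:1}); a last-visit decomposition plus Karamata's theorem then identifies the law of the limiting shift, which must be matched against the law computed in Corollary~\ref{coro:1}, while the renewal set itself converges to the $\beta^*_\ell$-stable regenerative set by a renewal theorem. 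A smaller but genuine omission is your one-line dismissal of the Rademacher signs: to bound the maximum over $k$ of the signed sums \emph{from below} by $\max_S \inddd{\what I_{S,n}\cap T\neq\emptyset}\sum_{j\in S}\inddd{\varepsilon_j=1}\Gamma_j^{-1/\alpha}$ one must rule out that every time at which the positively-signed chains indexed by $S$ meet is contaminated by visits of additional chains carrying negative signs; this requires showing that the event $A_n(T)$ of Lemma~\ref{l:intersection.subset} is asymptotically negligible, which is a separate estimate and not a consequence of symmetry alone.
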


We start with some preparation. 
Note that by \eqref{e:bn.wn}, $b_n^\alpha\in RV_{1-\beta}$. By
stationarity  it suffices to prove convergence in the space 
$\SM([0,1])$. We start by decomposing the process $(X_n)_{n\in\N_0}$ into the sum
of two independent stationary symmetric infinitely divisible
processes:
$$
X_n= X_n^{(1)} + X_n^{(2)}, \, n\in\N_0,
$$
with
$$
X_n^{(i)}:= \int_E f_n(x)M^{(i)}(dx), \,  n\in\N_0, \, i=1,2,
$$
with $f_n$ as in \eqref{e:the.process}, and $M^{(1)}$ and $M^{(2)}$
two independent homogeneous symmetric \id\ random measures on
$(E,\calE)$, each with control measure  $\mu$. The local L\'evy measure for
$M^{(1)}$ is the  measure $\rho$ restricted to the set $\{|z|\geq z_0\}$, 
while the local L\'evy measure for
$M^{(2)}$ is the  measure $\rho$ restricted to the set  $\{|z|< z_0\}$.
The first observation is that random variables $(X_n^{(2)})_{n\in\N_0}$ have
L\'evy measures supported by a bounded set, hence they have
exponentially fast decaying tails; see for example \cite{sato:1999}. Therefore, 
$$
\frac{1}{b_n} \max_{k=0,1,\ldots, n} |X_k^{(2)}|\to 0
$$
in probability as $n\to\infty$.  Therefore, without loss of generality
we may assume that, in addition to~\eqref{e:local.power}, the local
L\'evy measure $\rho$ is, to start with, 
supported by the set $\{|z|\geq  z_0\}$.

For each $n\in\N$, the random vector $(X_0,\dots,X_n)$ 
admits a
series representation that we will now describe. For $x>0$ let 
\begin{equation} \label{e:G.explicit}
G(x) :=\left\{ \begin{array}{ll}
a^{1/\alpha}x^{-1/\alpha} & 0<x< az_0^{-\alpha} \\
0 & x\ge az_0^{-\alpha}.
\end{array}
\right.
\end{equation}
It follows from Theorem 3.4.3 in~\cite{samorodnitsky:2016} that the
following representation in law holds: 
\equh\label{eq:series}
(X_k)_{k=0,\dots,n}\eqd\pp{ \sif j1
  \varepsilon_jG\bigl( \Gamma_j /2b_n^\alpha\bigr)
\inddd{T^k(U_j\topp n)_0=i_0}}_{k=0,\dots,n},
\eque
where $(\Gamma_j)_{j\in\N}$ are as in \eqref{e:usc.spec}, $(
\varepsilon_j)_{j\in\N}$ are i.i.d.~Rademacher random variables 
and 
$(U_j\topp n)_{j\in\N}$ are i.i.d.~$E$-valued random variables with
common law $\mu_n$,  determined by
\[
\frac{d\mu_n}{d\mu}(x) = \frac{1}{b_n^\alpha} \inddd{
T^k(x)_0=i_0 \ \ \text{for some $k=0,1,\ldots, n$}}, x\in E.
\]
All three sequences are independent.
Here and in the sequel, for $x\in E\equiv \Z^{\N_0}$ we write $T^k(x)_0 \equiv [T^k(x)]_0\in\Z$.
 

Our argument consists of coupling the series representation of
$\eta^{\alpha,\beta}$ in~\eqref{e:usc.spec} with the series
representation of the process in~\eqref{eq:series}. Note that the point process
$\bigl(\Gamma_j^{-1/\alpha}\bigr)_{j\in\N,\varepsilon_j = 1}$ is a Poisson
point process with mean measure $2^{-1}\alpha u^{-(1+\alpha)}\, du$,
$u>0$, and it can be represented in law as the point process
$\bigl(2^{-1/\alpha}\Gamma_j^{-1/\alpha}\bigr)_{j\in\N}$. Therefore,
we may and will work with a version of $\eta^{\alpha,\beta}$ given by
\[
\pp{\eta^{\alpha,\beta}(t)}_{t\in[0,1]}=\pp{2^{1/\alpha}\sum_{j=1}^\infty
\inddd{\varepsilon_j=1}\Gamma_j^{-1/\alpha}\inddd{t\in\wt R_j\topp\beta}}_{t\in[0,1]}.
\]
We proceed 
through a truncation argument. Introduce for $\ell=1,2,\ldots$ 
\[
M_{\ell,n}(B):=  \max_{k\in  nB}  \summ  j1\ell\varepsilon_j
 G\bigl( \Gamma_j /2b_n^\alpha\bigr)
\inddd{T^k(U_j\topp n)_0=i_0}, n\in\N,
\]
and
\equh\label{eq:alt1}
\eta_\ell^{\alpha,\beta}(t):=2^{1/\alpha}\sum_{j=1}^\ell
\inddd{\varepsilon_j=1} 
\Gamma_j^{-1/\alpha} \inddd{t\in\wt R_j\topp\beta}, \, t\in[0,1].
\eque
We also let $\eta^{\alpha,\beta}_{\ell}$ denote the corresponding truncated random sup-measure. 
The key steps of the proof of Theorem~\ref{thm:SM} are  
Propositions \ref{prop:SM_ell} and  \ref{prop:SM_remainder} below.

\begin{Prop}\label{prop:SM_ell}
Under the assumptions of Theorem~\ref{thm:SM}, for all $\ell\in\N$,
\[
\frac1{b_n}M_{\ell,n} \weakto a^{1/\alpha} \eta_\ell^{\alpha,\beta}
\]
as $n\to\infty$ in the space of $\SM([0,1])$. 
\end{Prop}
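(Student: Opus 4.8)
The plan is to establish the sup-vague convergence through the finite-dimensional criterion recalled after Theorem~3.2 of~\cite{obrien:torfs:vervaat:1990}: it suffices to prove that for every finite family of open $\eta_\ell^{\alpha,\beta}$-continuity intervals $I_1,\dots,I_m\subset[0,1]$,
\[
\pp{\frac1{b_n}M_{\ell,n}(I_1),\dots,\frac1{b_n}M_{\ell,n}(I_m)}\weakto a^{1/\alpha}\pp{\eta_\ell^{\alpha,\beta}(I_1),\dots,\eta_\ell^{\alpha,\beta}(I_m)}.
\]
Since $\ell$ is fixed, the whole argument will be a continuous-mapping argument built on the joint convergence of finitely many genuinely finite-dimensional ingredients. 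The first ingredient is the weights. Because $b_n^\alpha\to\infty$, for each fixed $j\le\ell$ and all large $n$ the argument $\Gamma_j/2b_n^\alpha$ falls in the power-law range of $G$ in~\eqref{e:G.explicit}, so
\[
\frac1{b_n}G\bigl(\Gamma_j/2b_n^\alpha\bigr)\longrightarrow w_j:=a^{1/\alpha}2^{1/\alpha}\Gamma_j^{-1/\alpha}\qquad\text{a.s.,}
\]
and these limiting weights, carrying the signs $\varepsilon_j$, are exactly the coefficients of $a^{1/\alpha}\eta_\ell^{\alpha,\beta}$ in the representation~\eqref{eq:alt1}.

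The core ingredient is the joint weak convergence of the rescaled visit sets. Writing $S_j\topp n:=\{k\in\{0,\dots,n\}:T^k(U_j\topp n)_0=i_0\}$, I would prove
\[
\pp{\tfrac1n S_1\topp n,\dots,\tfrac1n S_\ell\topp n}\weakto\pp{\wt R_1\topp\beta,\dots,\wt R_\ell\topp\beta}\qquad\text{in }\filF([0,1])^\ell.
\]
As the $U_j\topp n$ are i.i.d.~with law $\mu_n$ and the $\wt R_j\topp\beta$ are independent, this reduces to the single-set statement $\tfrac1n S_1\topp n\weakto\wt R_1\topp\beta$. Here I would exploit the regenerative structure of the successive visits to $i_0$: under $\mu_n$ the first entrance time rescaled by $n$ converges to the shift $V\topp\beta$, whose law $\proba(V\topp\beta\le v)=v^{1-\beta}$ reflects the size-biasing built into $\mu_n$, while the tail assumption~\eqref{eq:F}---placing the return times in the domain of attraction of a $\beta$-stable subordinator---together with the one-jump control~\eqref{eq:Doney}, forces the rescaled renewal set of the later visits to converge to the closure of the range of that subordinator, namely $R\topp\beta$. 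I expect this to be the \emph{main obstacle}, as it is where all the probabilistic hypotheses on the underlying chain are consumed; the criterion for weak convergence of random closed sets through the separating class of finite unions of open intervals recalled in Section~\ref{sec:prelim} is the natural tool.

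With the joint set convergence in hand, I would pass to the intersections. For each $S\subseteq\{1,\dots,\ell\}$, Lemma~\ref{lem:intersection} (applied iteratively) identifies the limit of $\tfrac1n\bigcap_{j\in S}S_j\topp n$: it is empty a.s.~when $|S|>\ell_\beta$ and a randomly shifted $\beta_{|S|}$-stable regenerative set otherwise, which is exactly $I_S$ from~\eqref{eq:IS}. Establishing these marginal intersection convergences and feeding them, with the joint convergence above, into Theorem~\ref{thm:joint_closed_sets}---whose part~(i) disposes of the empty limits automatically---yields
\[
\ccbb{\tfrac1n\textstyle\bigcap_{j\in S}S_j\topp n}_{S}\weakto\ccbb{I_S}_{S}\qquad\text{in }\filF([0,1])^{2^\ell}.
\]

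Finally I would express both sides as a continuous functional of the weights and the intersections. For every $n$,
\[
\frac1{b_n}M_{\ell,n}(I)=\max_{k\in nI}\sum_{j=1}^\ell\varepsilon_j\,\frac1{b_n}G\bigl(\Gamma_j/2b_n^\alpha\bigr)\indd{k\in S_j\topp n},
\]
and the value is governed by which intersections $\bigcap_{j\in S}S_j\topp n$ meet $nI$. Since every weight is positive, any collection containing an index with $\varepsilon_j=-1$ is suboptimal, so the maximum is attained on purely positive collections; moreover, for a positive collection $S^+$ with $I_{S^+}\neq\emptyset$, each additional negative set meets $I_{S^+}$ only in a strictly lower-dimensional (or empty) subset by Lemma~\ref{lem:intersection}, so deleting those pieces does not change the closure of $I_{S^+}$, and a point of $I_{S^+}$ avoiding all negative sets can be found in any interval that $I_{S^+}$ meets. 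Hence the limiting functional equals $\max\{\sum_{j\in S^+}w_j:S^+\subseteq\{j:\varepsilon_j=1\},\ I_{S^+}\cap I\neq\emptyset\}=a^{1/\alpha}\eta_\ell^{\alpha,\beta}(I)$, and restricting to $\eta_\ell^{\alpha,\beta}$-continuity intervals removes the ambiguity between $I$ and $\wb I$, making the functional a.s.~continuous at the limit. The continuous mapping theorem then delivers the finite-dimensional convergence jointly over $I_1,\dots,I_m$, completing the proof.
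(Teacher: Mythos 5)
Your architecture matches the paper's: reduce to finite-dimensional convergence over continuity intervals, obtain joint Fell-convergence of the rescaled simultaneous-visit sets to the $I_S$ via Theorem~\ref{thm:joint_closed_sets}, discard the negative signs, and finish by continuous mapping. However, the two steps that carry most of the weight are left unsupported, and in both cases the argument you sketch would not close them. The first concerns the marginal convergence $\frac1n\bigcap_{j\in S}S_j\topp n\weakto I_S$ for $|S|\ge 2$. This does \emph{not} follow from the joint convergence of the individual visit sets together with Lemma~\ref{lem:intersection}: intersection is only upper semi-continuous in the Fell topology, which is precisely why Theorem~\ref{thm:joint_closed_sets}(ii) takes the marginal convergences \eqref{eq:marginal_A} as a hypothesis rather than producing them. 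Lemma~\ref{lem:intersection} identifies what the limit must be \emph{if} convergence holds; it says nothing about the discrete sets. The paper proves these marginals separately (Theorem~\ref{thm:joint}): the simultaneous returns of $|S|$ independent chains form a renewal process whose renewal mass function is the product of the individual ones, and one needs the local renewal theorem $u(n)\sim n^{\beta-1}/(\Gamma(\beta)\Gamma(1-\beta)L(n))$ --- this is where \eqref{eq:Doney} is actually consumed, not in the single-chain convergence you flag as the main obstacle --- to convert that product into the tail asymptotics of the simultaneous return time (Lemma~\ref{lem:1}), after which Corollary~\ref{coro:1} identifies the law of the limiting shift.

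The second gap is in the reduction to purely positive collections, where your argument lives entirely at the level of the limit. That $I_{S^+}$ minus the negative-sign sets is dense in $I_{S^+}$ is true, but relative complementation is not Fell-continuous, so this cannot be transported back to finite $n$ by any continuity argument. At finite $n$ the value at time $k$ is $\sum_{j\in S(k)}\varepsilon_j w_j$ for the \emph{exact} visit set $S(k)$; one is not free to ``delete the negative indices'' unless there is another time in $nI$ at which exactly the chains in $S^+$ (and no negatively-signed chain) return. You must therefore rule out the event that every simultaneous visit of the chains in $S^+$ inside $nI$ is contaminated by a visit of some chain outside $S^+$ (including the case $S^+=\emptyset$, where the maximum could otherwise be negative). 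The paper's Lemma~\ref{l:intersection.subset} does exactly this with the event $A_n(T)$ of \eqref{eq:An} and the estimate $\max_{k}\mu_n\bigl(T^k(\cdot)_0=i_0\bigr)=b_n^{-\alpha}\to0$; without such a probabilistic estimate the identity between $b_n^{-1}M_{\ell,n}(I)$ and your max-over-positive-collections functional of the prelimit sets is not established, and the continuous mapping theorem has nothing to act on.
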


\begin{Prop}\label{prop:SM_remainder}
Under the assumptions of Theorem~\ref{thm:SM}, for all $\delta>0$, 
\[
\lim_{\ell\to\infty}\limsupn\proba\pp{\max_{k=0,\dots,n}\frac{1}{b_n}\abs{\sif
    j{\ell+1}\varepsilon_j
G\bigl( \Gamma_j /2b_n^\alpha\bigr)
\inddd{T^k(U_j\topp n)_0=i_0}
}>\delta} = 0.
\]
\end{Prop}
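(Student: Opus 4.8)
The plan is to reduce the statement to a first-moment bound over the $n+1$ time points, after which everything rests on a single-time tail estimate for a Poisson sum. First I would use the explicit form \eqref{e:G.explicit} of $G$ and the scaling
\[
\frac1{b_n}G\bigl(\Gamma_j/2b_n^\alpha\bigr)=a^{1/\alpha}2^{1/\alpha}\,\Gamma_j^{-1/\alpha}\indd{\Gamma_j<2az_0^{-\alpha}b_n^\alpha}=:c_j\ge0 ,
\]
so that the quantity inside the probability equals $\max_{0\le k\le n}\bigl|\sum_{j>\ell}\varepsilon_j c_j\indd{T^k(U_j\topp n)_0=i_0}\bigr|$. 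Dropping the Rademacher signs via $|\sum\varepsilon_j c_j(\cdots)|\le\sum c_j(\cdots)$, it suffices to prove $\lim_{\ell}\limsupn\proba(\max_{0\le k\le n} S_k>\delta)=0$ for the nonnegative quantity $S_k:=\sum_{j>\ell}c_j\indd{T^k(U_j\topp n)_0=i_0}$.

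The one clean computation is the per-time hitting probability. Since $k\le n$, the defining density of $\mu_n$ gives $\mu_n(x_k=i_0)=b_n^{-\alpha}\mu(x_k=i_0)=b_n^{-\alpha}\pi_{i_0}=b_n^{-\alpha}$ by invariance of $\pi$. As the $U_j\topp n$ are i.i.d.\ with law $\mu_n$ and independent of $(\Gamma_j)$, for each fixed $k$ the events $\{T^k(U_j\topp n)_0=i_0\}$ are, given $(\Gamma_j)$, independent of probability $b_n^{-\alpha}$; hence the retained points $\{\Gamma_j:T^k(U_j\topp n)_0=i_0\}$ form an independent $b_n^{-\alpha}$-thinning of the unit-rate process $(\Gamma_j)$, i.e.\ a Poisson process of intensity $b_n^{-\alpha}\,du$. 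Consequently the law of $S_k$ is that of $\sum_{u\in\Pi}a^{1/\alpha}2^{1/\alpha}u^{-1/\alpha}$, with $\Pi$ a Poisson process of intensity $b_n^{-\alpha}\,du$ on $(\Gamma_\ell,\,2az_0^{-\alpha}b_n^\alpha)$. This is the finite-$n$ counterpart of the picture that at a typical time only $O(1)$ terms are present, matching the upper-semicontinuity estimate $\sup_t\eta_{2,m}^{\alpha,\beta}(t)\le\sum_{j=m+1}^{m+\ell_\beta}U\topp\alpha_{(j,T)}$ used in Section~\ref{sec:RSM}.

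Now I would split off the atypical event that $\Gamma_\ell$ is small. Fix $s_0$ so large that any $\ell_\beta$ weights $c_j$ with $\Gamma_j>s_0$ sum to less than $\delta$, that is $\ell_\beta\,a^{1/\alpha}2^{1/\alpha}s_0^{-1/\alpha}<\delta$, and write
\[
\proba\Bigl(\max_{0\le k\le n} S_k>\delta\Bigr)\le \proba(\Gamma_\ell<s_0)+\summ k0n \proba\bigl(S_k>\delta,\ \Gamma_\ell\ge s_0\bigr).
\]
On $\{\Gamma_\ell\ge s_0\}$ every term of $S_k$ is at most $\delta/\ell_\beta$, so $S_k>\delta$ forces $\Pi$ to place at least $\ell_\beta+1$ points in a configuration of total weight exceeding $\delta$. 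The heart of the argument is the single-time estimate
\[
\proba\bigl(S_k>\delta\,\big|\,\Gamma_\ell=s\bigr)\le C(\delta)\,b_n^{-\alpha(\ell_\beta+1)}\qquad\text{uniformly in }s\ge s_0 ,
\]
obtained by conditioning on the number of points of $\Pi$: no configuration of at most $\ell_\beta$ points can reach level $\delta$, while the configuration volume for the minimal number $\ell_\beta+1$ of points is finite, and each of the $\ell_\beta+1$ required coincidences contributes one factor $b_n^{-\alpha}$. This is precisely where the threshold $\ell_\beta$ of Lemma~\ref{lem:intersection} and \eqref{eq:IS_beta} re-enters: by the definition of $\ell_\beta$ one has $(\ell_\beta+1)(1-\beta)>1$, and since $b_n^\alpha\in RV_{1-\beta}$ this yields $(n+1)\,b_n^{-\alpha(\ell_\beta+1)}\to0$. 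Combining the display gives $\limsupn\proba(\max_k S_k>\delta)\le\proba(\Gamma_\ell<s_0)$, and letting $\ell\to\infty$ with $s_0$ fixed completes the proof.

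The hard part will be the single-time tail estimate with the correct power $\ell_\beta+1$. The danger is the accumulation of the infinitely many tiny weights (those with $\Gamma_j$ of order $b_n^\alpha$): a crude first-moment bound gives only $b_n^{-1}$ and an exponential-Chebyshev bound only $b_n^{-\alpha\ell_\beta}$, both destroyed after multiplication by $n$. One must instead show, at every weight scale, that reaching level $\delta$ requires at least $\ell_\beta+1$ of the independent thinned points to carry non-negligible weight simultaneously, so that the true cost is $b_n^{-\alpha(\ell_\beta+1)}$; this needs a multiscale bookkeeping of the Poisson configurations rather than a single moment inequality. A secondary delicate point is the borderline case $1/(1-\beta)\in\N$, where $(\ell_\beta+1)(1-\beta)=1$ and the conclusion $(n+1)b_n^{-\alpha(\ell_\beta+1)}\to0$ must be extracted from the slowly varying factor carried by $b_n^\alpha\in RV_{1-\beta}$.
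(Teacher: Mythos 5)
Your skeleton---the union bound over the $n+1$ time points, the observation that at a fixed time $k$ the retained points form an independent $b_n^{-\alpha}$-thinning of $(\Gamma_j)_{j\in\N}$, and a truncation removing the indices with small $\Gamma_j$---is the same as the paper's. But there is a genuine gap at the step you yourself flag as ``the hard part'': the single-time estimate $\proba(S_k>\delta\mid\Gamma_\ell=s)\le C(\delta)\,b_n^{-\alpha(\ell_\beta+1)}$ is never proved (the ``multiscale bookkeeping'' is only announced), and, worse, even if proved it would not suffice. By the definition of $\ell_\beta$ one only has $(\ell_\beta+1)(1-\beta)\ge 1$, and in the borderline case $1/(1-\beta)\in\N$ (e.g.\ $\beta=1/2$) one gets $(n+1)\,b_n^{-\alpha(\ell_\beta+1)}\asymp L(n)^{-(\ell_\beta+1)}$, which need not tend to zero: $L\equiv 1$ is allowed by \eqref{eq:F}, and then there is no slowly varying factor to ``extract''. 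Moreover, for your choice of $s_0$ (each weight capped below $\delta/\ell_\beta$) the bound $b_n^{-\alpha(\ell_\beta+1)}$ is essentially tight---$\ell_\beta+1$ points each of weight in $(\delta/(\ell_\beta+1),\delta/\ell_\beta)$ do occur with probability of that order---so the approach as stated cannot close in the borderline case. Tying the exponent to the intersection threshold of Lemma~\ref{lem:intersection} is a red herring: the number of thinned points needed to reach level $\delta$ is not intrinsically $\ell_\beta+1$ but roughly $\delta/w_{\max}$ with $w_{\max}=a^{1/\alpha}2^{1/\alpha}s_0^{-1/\alpha}$, and you are free to push it above $1/(1-\beta)$ by enlarging $s_0$ (equivalently $M$).

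The paper exploits exactly this freedom, and does so with the ``single moment inequality'' you dismiss as insufficient. After the thinning one rescales by $b_n$: since $\mu(A_0)=1$, the point process $\bigl(b_n\Gamma_j^{-1/\alpha}\ind_{\{T^k(U_j\topp n)_0=i_0\}}\bigr)_{j\in\N}$ is again a Poisson random measure with intensity $\alpha u^{-\alpha-1}du$, so the single-time sum equals in law $b_n^{-1}\sum_j\Gamma_j^{-1/\alpha}\ind_{\{M/b_n^\alpha\le\Gamma_j\le 2az_0^{-\alpha}\}}$ for a fresh unit-rate sequence. The first $j_M=\lfloor M^{1/\alpha}\delta'/2\rfloor$ terms are each at most $M^{-1/\alpha}$ deterministically, hence contribute at most $\delta'/2$; for the remainder, Markov's inequality of order $p$ gives a bound $C\,b_n^{-p}$ whose constant is finite once $j_M$ (i.e.\ $M$) is large enough relative to $p/\alpha$. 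Taking $p>\alpha/(1-\beta)$ yields $n\,b_n^{-p}\to0$ with room to spare and no borderline case. You should replace the $(\ell_\beta+1)$-coincidence argument by this rescaling-plus-moment bound, or at least redo your configuration count with the required number of heavy points pushed strictly above $1/(1-\beta)$; as written the proposal does not close.
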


We start with several preliminary results needed for the proof of 
Proposition~\ref{prop:SM_ell}. First of all we establish
convergence of simultaneous return times of independent Markov chains. Introduce
\[
\what R_{j,n}\topp\beta:=\frac1n\ccbb{k\in\{0,\dots,n\}:T^k(U_j\topp
  n)_0 = i_0},
\]
\[
\what I_{S,n}:= \bigcap_{j\in S}\what
R_{j,n}\topp\beta,  S\subset\N,
S\neq\emptyset \qmand \what I_{\emptyset,n}:=\frac1n\ccbb{0,1,\dots,n}.\]
Recall the definition of $I_S$ in~\eqref{eq:IS}. 

\begin{Thm}\label{thm:joint}
Assume that \eqref{eq:F} and \eqref{eq:Doney} hold. Then for all $\ell\in\N$, 
\[
\pp{\what I_{S,n}}_{S\subset\{1,\dots,\ell\}}\weakto \pp{I_S\cap[0,1]}_{S\subset\{1,\dots,\ell\}}
\]
as $n\to\infty$ in $\filF([0,1])^{2^\ell}$, where for each $n$ the law in the left-hand side is computed  under $\mu_n$.
\end{Thm}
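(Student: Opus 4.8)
The plan is to bring the statement into the framework of Theorem~\ref{thm:joint_closed_sets}, taking $A_j(n)=\what R_{j,n}\topp\beta$ and $A_j=\wt R_j\topp\beta\cap[0,1]$, so that the intersections become $A_S(n)=\what I_{S,n}$ and $A_S=I_S\cap[0,1]$ for $S\subset\{1,\dots,\ell\}$. That theorem reduces the desired joint convergence to two ingredients: the joint convergence~\eqref{eq:joint} of the $\ell$ individual rescaled return sets, and the separate marginal convergence~\eqref{eq:marginal_A} of each single intersection $\what I_{S,n}\weakto I_S\cap[0,1]$. The empty intersections are handled by part~(i), the nonempty ones by part~(ii).

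\textbf{Joint convergence of the single sets.} Under $\mu_n$ the starting points $U_1\topp n,\dots,U_\ell\topp n$ are i.i.d., and the limiting sets $\wt R_1\topp\beta,\dots,\wt R_\ell\topp\beta$ are independent. Consequently the joint convergence $(\what R_{j,n}\topp\beta)_{j=1}^\ell\weakto(\wt R_j\topp\beta\cap[0,1])_{j=1}^\ell$ in $\filF([0,1])^\ell$ follows from the single-coordinate statement $\what R_{j,n}\topp\beta\weakto \wt R_j\topp\beta\cap[0,1]$, which also supplies the singleton cases of~\eqref{eq:marginal_A}. This one-dimensional convergence asserts that the rescaled set of return times to $i_0$ of the null-recurrent chain, started from $\mu_n$, converges to a $\beta$-stable regenerative set shifted by an independent $V\topp\beta$. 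I would establish it through the hitting-probability (separating class) criterion recalled in Section~\ref{sec:prelim}, with the tail assumption~\eqref{eq:F} placing the renewal sequence in the domain of attraction of the $\beta$-stable subordinator and the regularity condition~\eqref{eq:Doney} controlling the local behaviour; the $v^{1-\beta}$ limit law of the rescaled first entrance time under $\mu_n$ produces precisely the shift $V\topp\beta$.

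\textbf{Convergence of a single intersection.} This is the crux and the main obstacle, since intersection is only upper-semicontinuous, not continuous, in the Fell topology, so $\what I_{S,n}\weakto I_S\cap[0,1]$ for $|S|\ge 2$ cannot be read off from the joint convergence by continuous mapping. For $|S|>\ell_\beta$ one has $I_S=\emptyset$ almost surely by~\eqref{eq:IS_beta}, and part~(i) of Theorem~\ref{thm:joint_closed_sets} gives $\what I_{S,n}\weakto\emptyset$ once~\eqref{eq:joint} is known. For $2\le|S|\le\ell_\beta$ I would again apply the hitting-probability criterion directly: for a finite union of open intervals $A$, the event $\{\what I_{S,n}\cap A\neq\emptyset\}$ is exactly that the $|S|$ independent chains share a common return to $i_0$ inside $nA$, and I would show that $\proba_{\mu_n}\pp{\what I_{S,n}\cap A\neq\emptyset}$ converges to the hitting probability of the shifted $\beta_{|S|}$-stable regenerative set on $A$. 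The asymptotics of these simultaneous-return probabilities, obtained from the strong renewal/local limit theory afforded by~\eqref{eq:F} and~\eqref{eq:Doney} together with the strong Markov structure, are the discrete counterpart of the overshoot computation in Lemma~\ref{lem:intersection}, and they identify the limit as $I_S\cap[0,1]$, a shift of a $\beta_{|S|}$-stable regenerative set.

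\textbf{Conclusion.} With~\eqref{eq:joint} in hand from the second step and all of~\eqref{eq:marginal_A} from the third (part~(i) covering $|S|>\ell_\beta$, the direct computation covering $1\le|S|\le\ell_\beta$, and the case $S=\emptyset$ trivial since $\tfrac1n\ccbb{0,\dots,n}\to[0,1]$), Theorem~\ref{thm:joint_closed_sets}(ii) yields $(\what I_{S,n})_{S\subset\{1,\dots,\ell\}}\weakto(I_S\cap[0,1])_{S\subset\{1,\dots,\ell\}}$ in $\filF([0,1])^{2^\ell}$, as required. I expect essentially all of the genuine work to sit in the simultaneous-return asymptotics of the third step.
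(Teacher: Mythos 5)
Your reduction coincides with the paper's: both invoke Theorem~\ref{thm:joint_closed_sets}, with part~(i) disposing of the over-intersected sets $|S|>\ell_\beta$ and part~(ii) reducing everything to the marginal convergences $\what I_{S,n}\weakto I_S\cap[0,1]$ (the paper additionally notes that, by exchangeability of the i.i.d.\ chains, only $S=\{1,\dots,\ell'\}$ need be treated). The divergence, and the gap, lies in the step you yourself flag as the crux. You propose to verify $\what I_{S,n}\weakto I_S\cap[0,1]$ for $2\le|S|\le\ell_\beta$ via the hitting-probability criterion on finite unions of open intervals, but you carry out neither side of that computation, and as stated the plan faces two real obstacles. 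First, for a union $A$ of several intervals, the $\mu_n$-probability of $\{\what I_{S,n}\cap A\neq\emptyset\}$ is not a single first-passage quantity: it involves the joint behaviour of the whole simultaneous-return set across several windows, so local limit asymptotics for one simultaneous return do not directly yield it. Second, to match the limit you must know the avoidance probabilities of $I_S\cap[0,1]$ on such unions, which requires identifying the law of $I_S$ --- in particular the law of its left endpoint --- and this identification is itself a nontrivial result (Corollary~\ref{coro:1}, proved in the appendix via hypergeometric identities) that your sketch neither states nor supplies.

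The paper sidesteps both obstacles by a different device: it decomposes $\what I_{S,n}=V_{S,n}\topp\beta+R_{S,n}\topp\beta$ into the first simultaneous return time and the subsequent simultaneous-return set. The first component converges by an explicit last-visit decomposition whose asymptotics come from Lemma~\ref{lem:1} (the tail $\wb{F^*}$ of the intersection renewal process is controlled through the product of the individual renewal functions, with \eqref{eq:Doney} needed to upgrade \eqref{eq:2} to the local estimate \eqref{eq:3}) together with Karamata's theorem; the second component is exactly a renewal set with regularly varying inter-renewal tail of index $-\beta^*_\ell$, whose convergence to $R\topp{\beta^*_\ell}$ is quoted from the renewal-set literature; and the two pieces are glued by continuous mapping for $(x,F)\mapsto(x+F)\cap[0,1]$, which is a.s.\ continuous at the limit. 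Corollary~\ref{coro:1} then identifies the resulting limit as $I_S\cap[0,1]$. To complete your argument you would either need to reproduce this decomposition or genuinely compute the multi-interval avoidance probabilities on both sides, which is considerably harder than the single first-passage asymptotic you gesture at.
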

\begin{proof}
By the second part of Theorem \ref{thm:joint_closed_sets}, it suffices to
show the marginal convergence for $S = \{1,\dots,\ell\}$, for all $\ell\in\N$.
First, we have seen in \eqref{eq:IS_beta} that if 
\equh\label{eq:l*}
\beta^*_\ell:=\ell\beta-\ell+1\in(0,1)
\eque
is {\em violated}, then the desired limit is the deterministic empty set. The convergence then follows from the first part of Theorem \ref{thm:joint_closed_sets}.
From now on we assume \eqref{eq:l*}. For the simultaneous return times $\what I_{S,n}$ of independent Markov chains indexed by $S$, by introducing
\begin{align*}
V_{S,n}\topp\beta & := \frac1n\min\ccbb{k=0,\dots,n:T^k(U_j\topp n)_0 = i_0, \mfa j\in S}\\
R_{S,n}\topp\beta& := \frac1n\ccbb{k=0,\dots,n:T^{nV_{I,n}\topp\beta + k}(U_j\topp n)_0 = i_0, \mfa j\in S},
\end{align*}
we have the decomposition $\what I_{S,n} = V_{S,n}\topp\beta + R_{S,n}\topp\beta$. Applying 
Corollary \ref{coro:1} to $I_S$ in \eqref{eq:IS} we have that $I_S = \wt V\topp\ell + R\topp{\beta_\ell^*}$ where $\wt V\topp\ell$ satisfies \eqref{eq:Vt} with $\beta_j = \beta, j=1,\dots,\ell$ and $R\topp{\beta_\ell^*}$ is a $\beta_\ell^*$-stable regenerative set, and the two are independent. 
In summary, the desired convergence now becomes
\equh\label{eq:VR}
\pp{V_{S,n}\topp\beta + R_{S,n}\topp\beta}\cap[0,1] \weakto \pp{\wt V\topp\ell+R\topp{\beta^*_\ell}}\cap[0,1],
\eque

We first show the convergence of $V_{S,n}^{(\beta)}$ to $\wt
V\topp\ell$, and we start with the last visit decomposition 
\[
\mu_n\pp{V_{S,n}\topp\beta\le x} = 
\summ k1{\floor{nx}}\mu_n\pp{\mbox{last simultaneous return to $i_0$
    before $\floor{nx}$ is at time $k$}}.
\]
Denoting by $\wb{F^*}$  the tail of the time between two successive
simultaneous visits to $i_0$ by $\ell$ i.i.d.~Markov chains, we have
\begin{align*}
\mu_n\pp{V_{S,n}\topp\beta\le x} & = \summ k1{\floor{nx}}
                                   \mu_n\pp{T^k(U_j\topp n)_0 = i_0 
                                   \ \text{for all} \  j\in S}\wb{F^*}(\floor{nt}-k) \\
& = \frac1{w_n^{\ell}}\summ k1{\floor{nx}}\wb{F^*}(\floor{nx}-k) = \frac1{w_n^\ell}\summ k0{\floor{nx}-1}\wb{F^*}(k).
\end{align*}
By Lemma \ref{lem:1} in Appendix and Karamata's theorem,
\begin{align*}
\mu_n\pp{V_{S,n}\topp\beta\le x} & \sim \frac{(1-\beta)^\ell}{L(n)^\ell}n^{\beta^*_\ell-1} L^*(\floor{nx})\frac{\floor{nx}^{1-\beta^*_\ell}}{1-\beta^*_\ell}\\
&\to
  x^{1-\beta^*_\ell}\frac{(\Gamma(\beta)\Gamma(2-\beta))^\ell}{\Gamma(\beta^*_\ell)\Gamma(2-\beta^*_\ell)} = \proba\pp{\wt V\topp\ell\le x}
\end{align*}
as $n\to\infty$ for $x\in[0,1]$ (comparing with \eqref{eq:Vt}). 

Furthermore, the law of $nR\topp\beta_{S,n}$ is that of a renewal
process with inter-renewal times distributed as $F^*$; see Appendix \ref{sec:renewal}. Therefore, by
\citet[Theorem A.8]{giacomin07random}, $R\topp\beta_{S,n}\weakto
R\topp{\beta^*}$ in $\filF(\R_+)$ as $n\to\infty$. 
The claim \eqref{eq:VR}  now follows by an application of the
continuous mapping theorem: the map 
\[
\R_+\times \filF([0,1]) \ni (x,F)\mapsto (x+F)\cap[0,1]\in\filF([0,1])
\]
 is continuous, except at the point $\{(x,F): (x+F)\cap[0,1] = \{1\}\}$
 (e.g.~\citet[Appendix B]{molchanov05theory}). The probability that
 the latter point is  hit by $\wt
 V\topp{\ell}+R\topp{\beta^*_\ell}\cap[0,1]$ is, however, equal to
 zero. This proof is thus complete.
\end{proof}
 
Next, we show that for each open interval $T$, outside an event $A_n(T)$ to be defined below, of which the probability tends to zero as $n\to\infty$, the following key identity holds:
\begin{multline}\label{eq:Anc}
\max_{k\in nT}\summ j1\ell\varepsilon_j G\bigl( \Gamma_j /2b_n^\alpha\bigr)
\inddd{T^k(U_j\topp n)_0 = i_0}
 = 
\max_{S\subset\{1,\dots,\ell\}}\inddd{\what I_{S,n}\cap
  T\neq\emptyset}\sum_{j\in S}\varepsilon_j G\bigl( \Gamma_j /2b_n^\alpha\bigr)\\
= \max_{S\subset\{1,\dots,\ell\}}\inddd{\what I_{S,n}\cap
  T\neq\emptyset}\sum_{j\in S}\inddd{\varepsilon_j = 1}G\bigl(
  \Gamma_j /2b_n^\alpha\bigr),
   \end{multline}
with the convention that $\sum_{j\in\emptyset} = 0$. 

To establish this,  we take a closer look at the simultaneous returns of Markov chain to $i_0$. We say that the chain indexed by $j$ returns to $i_0$ at time $k$, if $T^k(U_j\topp n)_0 = i_0$.  Note that if  
\[
\frac kn\in \what I_{S,n}\cap T = \pp{\bigcap_{j\in S}\what R_{j,n}\topp\beta}\cap T,
\]
then there might be another $j'\in\{1,\dots,\ell\}\setminus S$, such that the chain indexed by $j'$ returns to $i_0$ at the same time $k$ as well. 
We need an exact description of simultaneous returns of multiple chains. For this purpose, introduce
\[
\what I^*_{S,n}:= \what I_{S,n} \cap \pp{\bigcup_{j\in\{1,\dots,\ell\}\setminus S}\what R_{j,n}\topp\beta}^c,
\]
 the collection of all time points (divided by $n$) at which all chains indexed by $S$, and only these chains, return to $i_0$ simultaneously. 
  We define  the event
\equh\label{eq:An}
A_n(T):= \bigcup_{S\subset\{1,\dots,\ell\}}\pp{\ccbb{\what I_{S,n}\cap T\neq \emptyset}\cap \ccbb{\what I^*_{S,n}\cap T = \emptyset}}.
\eque
In words, on the complement of $A_n(T)$, if $\what I_{S,n}\cap T\neq \emptyset$ for some non-empty set $S$, then at some time point $k\in nT$, exactly those chains indexed by $S$ return to $i_0$. 
\begin{Lem} \label{l:intersection.subset}
For every open interval $T$, the identity~\eqref{eq:Anc} holds on $A_n(T)^c$, and $\limn\proba(A_n(T)) = 0$.  
\end{Lem}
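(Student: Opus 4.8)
The plan is to treat the two assertions separately: the identity~\eqref{eq:Anc} is a pathwise bookkeeping statement that holds on $A_n(T)^c$, while the vanishing of $\proba(A_n(T))$ is the substantive part and rests on Theorem~\ref{thm:joint} together with a Hausdorff dimension comparison in the limit.

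For the identity, for each $k\in\{0,\dots,n\}$ with $k/n\in T$ I would set $S_k:=\{j\in\{1,\dots,\ell\}:T^k(U_j\topp n)_0=i_0\}$, the \emph{exact} set of chains returning at time $k$. Then the inner sum over $j$ collapses to $\sum_{j\in S_k}\varepsilon_jG\bigl(\Gamma_j/2b_n^\alpha\bigr)$, and $k/n\in\what I^*_{S_k,n}$, while conversely $k/n\in\what I^*_{S,n}$ forces $S_k=S$. Hence the left-hand side of~\eqref{eq:Anc} equals $\max_{S:\what I^*_{S,n}\cap T\neq\emptyset}\sum_{j\in S}\varepsilon_jG\bigl(\Gamma_j/2b_n^\alpha\bigr)$, whereas the middle expression equals the same maximum but over the index set $\{S:\what I_{S,n}\cap T\neq\emptyset\}$ (the term $S=\emptyset$ contributes $0$ to both, using $\what I_{\emptyset,n}\cap T\neq\emptyset$ for large $n$). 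By the definition of $A_n(T)$ in~\eqref{eq:An} these two index sets coincide on $A_n(T)^c$, which gives the first equality. The second equality holds on the whole space: since $G\ge0$ by~\eqref{e:G.explicit} and $\what I_{S,n}\subseteq\what I_{S^+,n}$ for $S^+:=\{j\in S:\varepsilon_j=1\}$, one may discard from any admissible $S$ the terms with $\varepsilon_j=-1$ without changing the maximum, and $\sum_{j\in S^+}\varepsilon_jG=\sum_{j\in S^+}\inddd{\varepsilon_j=1}G$.

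For the second assertion, since there are only $2^\ell$ subsets it suffices to show $\proba\pp{\what I_{S,n}\cap T\neq\emptyset,\ \what I^*_{S,n}\cap T=\emptyset}\to0$ for each fixed $S\subseteq\{1,\dots,\ell\}$. Here I would invoke Theorem~\ref{thm:joint}, giving $(\what I_{S',n})_{S'\subseteq\{1,\dots,\ell\}}\weakto(I_{S'}\cap[0,1])_{S'}$, and pass by Skorokhod's representation to a common space carrying almost sure Fell convergence of the whole vector, noting that the deterministic relations $\what I_{S'\cup\{j\},n}=\what I_{S',n}\cap\what R_{j,n}$ (hence the expression of $\what I^*_{S,n}$ as a set difference) are preserved. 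On this space it is enough to prove that the $S$-th indicator in~\eqref{eq:An} tends to $0$ almost surely and then to apply bounded convergence.

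The heart of the matter, and the step I expect to be the main obstacle, is the case $|S|<\ell_\beta$, where $I_S$ is a shifted $\beta_{|S|}$-stable regenerative set and hitting $T$ has positive probability. On $\{I_S\cap T=\emptyset\}$ one has $I_S\cap\wb T=\emptyset$ a.s.\ (stable sets avoid the finite boundary $\partial T$), so Fell convergence forces $\what I_{S,n}\cap T=\emptyset$ eventually and the indicator vanishes. On $\{I_S\cap T\neq\emptyset\}$ I must produce a point of $I_S\cap T$ lying in $I^*_S:=I_S\setminus\bigcup_{j\notin S}\wt R_j\topp\beta$. Writing $I_S\setminus I^*_S=\bigcup_{j\notin S}I_{S\cup\{j\}}$ and applying Lemma~\ref{lem:intersection}, each $I_{S\cup\{j\}}=I_S\cap\wt R_j\topp\beta$ is empty or a shifted $\beta_{|S|+1}$-stable set, of Hausdorff dimension $\beta_{|S|+1}<\beta_{|S|}$, whereas $I_S\cap T$ has dimension $\beta_{|S|}$; a finite union of strictly lower-dimensional sets cannot cover it, so there is $t_0\in I^*_S\cap T$. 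As $\bigcup_{j\notin S}I_{S\cup\{j\}}$ is closed and misses $t_0$, I can choose an open interval $T'$ with $t_0\in T'\subseteq\wb{T'}\subseteq T$ and $\wb{T'}\cap I_{S\cup\{j\}}=\emptyset$ for all $j\notin S$. Fell convergence then yields, for all large $n$, both $\what I_{S,n}\cap T'\neq\emptyset$ and $\what I_{S\cup\{j\},n}\cap\wb{T'}=\emptyset$, so any point of $\what I_{S,n}\cap T'$ avoids every $\what R_{j,n}$ with $j\notin S$ and therefore lies in $\what I^*_{S,n}\cap T$; the indicator again vanishes eventually. The cases $|S|=\ell_\beta$ (where $I^*_S=I_S$ since $I_{S\cup\{j\}}=\emptyset$ a.s.) and $|S|>\ell_\beta$ (where even $\proba(\what I_{S,n}\cap T\neq\emptyset)\to0$) are immediate specializations of this reasoning, which completes the proof that $\proba(A_n(T))\to0$.
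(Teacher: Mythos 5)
Your proof is correct, and its two halves compare differently with the paper's argument. For the identity~\eqref{eq:Anc}, your bookkeeping via the exact return sets $S_k$ is essentially the paper's argument written out more carefully: the paper asserts the first equality with less detail, and your observation that the second equality holds on the whole space (by passing from $S$ to $S^+=\{j\in S:\varepsilon_j=1\}$) is a small simplification of the paper's corresponding step. For the substantive claim $\limn\proba(A_n(T))=0$, however, you take a genuinely different route. The paper's proof is elementary and quantitative: for a fixed $S$ it lets $K_n$ be the first time in $nT$ at which all chains indexed by $S$ return simultaneously, observes that on the bad event some chain $j\notin S$ must also return at time $K_n$, and bounds the probability of this, via independence of the chains, by $\max_{k=0,\dots,n}\proba(T^k(U_1\topp n)_0=i_0)=b_n^{-\alpha}\to0$; no limit theorem is needed and one even gets a rate. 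You instead invoke Theorem~\ref{thm:joint}, a Skorokhod coupling, and a Hausdorff-dimension comparison ($I_S\cap T$ has dimension $\beta_{|S|}$ on the event it is nonempty, while each $I_{S\cup\{j\}}$ has dimension at most $\beta_{|S|+1}<\beta_{|S|}$ or is empty) to produce a point of $I_S\cap T$ avoided by all the other regenerative sets, and then transfer this back to the discrete sets by Fell convergence on a small interval $T'$. This is heavier machinery: it leans on the standard but not-proved-in-the-paper fact that a stable regenerative set has full Hausdorff dimension in every interval it meets, and on the (measurable, hence preserved) set-algebra relations among the $\what I_{S,n}$ under the coupling, and it yields no rate; on the other hand it is more structural, explaining the negligibility of $A_n(T)$ through the limiting picture, and it handles the degenerate cases $S=\emptyset$ (where $I_\emptyset\cap T=T$ is not a shifted stable regenerative set, but the covering argument is trivial since the $\wt R_j\topp\beta$ are Lebesgue-null) and $|S|\ge\ell_\beta$ uniformly. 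Both arguments are valid; the paper's is the shorter and more self-contained of the two.
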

\begin{proof}
We first prove the first part of the lemma. 
Noticing that $S = \emptyset$ is also included in the union above, and
\[
\what I_{\emptyset,n}^*=\pp{\bigcup_{j=1,\dots,\ell}\what R_{j,n}\topp\beta}^c,
\]we see that   $A_n(T)$ includes the event that at every time $k$ at
least one of the $\ell$ chains returns to $i_0$. So on $A_n(T)^c$, the
first two terms in~\eqref{eq:Anc}, which are, clearly, always equal, 
are non-negative.  Furthermore, when $\what I_{S,n}\cap
T\neq\emptyset$ for some non-empty $S$, then for $S' :=\{j\in
S:\varepsilon_j=1\}\subset S$, $\what I_{S,n}\cap T\neq\emptyset$
implies $\what I_{S',n}\cap T\neq\emptyset$, and therefore restricted
to the event $A_n(T)^c$ we have $\what I^*_{S',n}\cap T
\neq\emptyset$. It follows that the second equality in~\eqref{eq:Anc}
also holds on $A_n(T)^c$.  

For the second part of the lemma, in view of~\eqref{eq:An}, it suffices to show for all $S$,
\[
\limn\proba\pp{\ccbb{\what I_{S,n}\cap T\neq\emptyset}\cap \ccbb{\what I^*_{S,n}\cap T=\emptyset}} = 0.
\]
The case $S = \emptyset$ is trivial. So without loss of generality, assume $S = \{1,\dots,\ell'\}$ for some $\ell'\in\{1,2,\dots,\ell-1\}$. Introduce
\[
K_n:=n\min\pp{\what I_{S,n}\cap T},
\]
the first time in $nT$ that all chains indexed by $S$ return to $i_0$ simultaneously. Then, 
\[
\ccbb{\what I_{S,n}\cap T\neq\emptyset} \cap \ccbb{\what I^*_{S,n}\cap T = \emptyset}\subset\bigcup_{j=\ell'+1}^\ell\ccbb{T^{K_n}\spp{U_{j}\topp n}_0 = i_0, \what I_{S,n}\cap T \neq \emptyset}.
\]
The probability of each event in the union on the right hand side is bounded from above by
\[
\proba\pp{T^{K_n}\spp{U\topp n_{j}}_0 = i_0\mmid \what I_{S,n}\cap T
  \neq\emptyset}\leq \max_{k=0,\dots,n}\proba\pp{T^k(U_1\topp n)_0 =
  i_0} = b_n^{-\alpha}
\]
by the i.i.d.~assumption on the chains. Since $b_n\to\infty$, the
proof is complete. 
\end{proof}
Now we are ready to prove the main result.
\begin{proof}[Proof of Proposition~\ref{prop:SM_ell}]
  By  Theorem 3.2 in \cite{obrien:torfs:vervaat:1990} 
 and the fact that the stable regenerative sets do not hit
 points, it suffices to show, for all $m\in\N$ and all disjoint open
 intervals $T_i = (t_i,t_i')\subset[0,1], i=1,\dots,m$, 
\equh\label{eq:Mln_fdd}
  \pp{\frac1{b_n}M_{\ell,n}(T_i)}_{i=1,\dots,m} \weakto
  \pp{ a^{1/\alpha}\eta^{\alpha,\beta}_\ell(T_i)}_{i=1,\dots,m} 
\eque
  as random vectors in $\R^m$.  The expression \eqref{e:G.explicit}
  and the fact that $b_n\to\infty$ tell us that the event
  $B_n:=\{\Gamma_\ell/2b_n^{\alpha}<az_0^{-\alpha}\}$ has probability
  going to 1 as $n\to\infty$, and on $B_n$ we have
  $G(\Gamma_j/2b_n^\alpha) =
  (2a)^{1/\alpha}\Gamma_j^{-1/\alpha}b_n$. 
  In  particular, on the event $B_n$ we have
  \[
  \frac1{b_n}M_{\ell,n}(T_i) = \max_{k\in nT_i}\summ j1\ell\varepsilon_j(2a)^{1/\alpha}\Gamma_j^{-1/\alpha}\inddd{T^k(U_j\topp n)_0 = i_0}.
  \] Therefore, proving~\eqref{eq:Mln_fdd} is
  the same as proving that
 \equh\label{eq:fdd}
\left( \max_{k\in   nT_i}   \summ
  j1\ell\varepsilon_j  2^{1/\alpha}\Gamma_j^{-1/\alpha}
\inddd{T^k(U_j\topp n)_0=i_0}
\right)_{i=1,\dots,m} \weakto
\pp{  \eta^{\alpha,\beta}_\ell(T_i)}_{i=1,\dots,m} \,.
 \eque
The first part of Lemma~\ref{l:intersection.subset} yields that on $A_n(T_i)^c\cap B_n$,
\[
\max_{k\in   nT_i}   \summ
  j1\ell\varepsilon_j\Gamma_j^{-1/\alpha} 
\inddd{T^k(U_j\topp n)_0 = i_0}
 = \max_{S\subset\{1,\dots,\ell\}}\inddd{\what I_{S,n}\cap
  T_i\not=\emptyset} \sum_{j\in S}
  \inddd{\varepsilon_j=1}\Gamma_j^{-1/\alpha}. 
\]
Since by Lemma
\ref{l:intersection.subset}, $\proba(A_n(T_i)^c\cap B_n)\to 1$ as $n\to\infty$, 
the statement \eqref{eq:fdd} will follow once we prove that
$$
\left( \max_{S\subset\{1,\dots,\ell\}}\inddd{\what I_{S,n}\cap  T_i\not=\emptyset} \sum_{j\in S} 2^{1/\alpha}\inddd{\varepsilon_j = 1}\Gamma_j^{-1/\alpha}\right)_{i=1,\ldots, m} \weakto \pp{ \eta^{\alpha,\beta}_\ell(T_i)}_{i=1,\dots,m} \,.
$$
This is, however, an immediate consequence of Theorem \ref{thm:joint} and the fact that
$\eta^{\alpha,\beta}_\ell(T_i)$ can be written in the form (recalling \eqref{eq:alt1})
$$
\eta^{\alpha,\beta}_\ell(T_i) =
\max_{S\subset\{1,\dots,\ell\}}\inddd{  I_{S}\cap   T_i\not=\emptyset} 2^{1/\alpha}\sum_{j\in S}\inddd{\varepsilon_j = 1}\Gamma_j^{-1/\alpha}, \ i=1,\ldots, m\,.
$$
\end{proof}
\begin{proof}[Proof of Proposition~\ref{prop:SM_remainder}]
For $M>0$ let $D^M_\ell:=\{ \Gamma_{\ell+1}\geq M \}$. It is clear
that $\lim_{\ell\to\infty}\proba(D^M_\ell)= 1$.  We have 
\begin{multline*}
 \proba \pp{\left\{ \max_{k=0,\dots,n}\frac{1}{b_n}\left|\sif
    j{\ell+1}\varepsilon_j
G\bigl( \Gamma_j /2b_n^\alpha\bigr)
\inddd{T^k(U_j\topp n)_0=i_0}\right|
>\delta \right\}  \cap D_\ell^M} \\
 \leq  \sum_{k=0}^n \proba\left( \left\{\left|\sum_{j=\ell+1}^\infty \varepsilon_j
       \Gamma_j^{-1/\alpha} \inddd{ \Gamma_j\leq 2ab_n^\alpha
       z_0^{-\alpha}}\inddd{ T^k(U_j\topp
       n)_0=i_0}\right|>\frac\delta{(2a)^{1/\alpha}}  \right\} \cap
       D_\ell^M\right).
\end{multline*}
Note that on the right hand side above, the summand takes the same value for all $k=0,1,\dots,n$. Write $\delta' := \delta/(2a)^{1/\alpha}$. We shall show that, for all $\delta'>0$, one can choose $M$ depending on $\alpha$, $\beta$ and $\delta'$ only, such that  for all $\ell$,
\[
\limsupn n\proba\pp{\ccbb{\abs{\sum_{j=\ell+1}^\infty\varepsilon_j\Gamma_j^{-1/\alpha}\inddd{\Gamma_j\le 2ab_n^{\alpha}z_0^{-\alpha}}\inddd{T^k(U_j\topp n)_0 = i_0}}>\delta'}\cap D_\ell^M} = 0.
\]
The desired result then follows. 
To show the above, first observe that the probability of interest is bounded from above by
\equh\label{eq:M}
\proba\left(  \sum_{j=1}^\infty 
       \Gamma_j^{-1/\alpha} \inddd{M\leq \Gamma_j\leq 2ab_n^\alpha
       z_0^{-\alpha}}\inddd{T^k(U_j\topp
       n)_0=i_0}>\delta' \right).
\eque
Observe that  the restriction
to $(0,\infty)$ of the point process with the points
$$
\pp{b_n\Gamma_j^{-1/\alpha} \inddd{T^k(U_j\topp
       n)_0=i_0}}_{j\in\N}
$$
represents a Poisson random measure on $(0,\infty)$ with intensity
$\mu(A_0)\alpha u^{-(\alpha+1)}\, du$, $u>0$, and another representation
of the same Poisson random measure is 
$$
 \pp{ \mu(A_0)^{1/\alpha}\Gamma_j^{-1/\alpha}}_{j\in\N}.
$$
By definition of the Markov chain, $\mu(A_0) = 1$. 
Therefore,~\eqref{eq:M}  becomes
\begin{multline}
  \proba\left( b_n\inv
\sum_{j=1}^\infty \Gamma_j^{-1/\alpha}  
 \inddd{ M/b_n^\alpha\leq \Gamma_j
 \leq 2a
       z_0^{-\alpha}} >\delta'  \right)
       \\
        \le  \proba\left( b_n\inv
\sum_{j=j_M+1}^\infty  \Gamma_j^{-1/\alpha}  
\inddd{  \Gamma_j\leq 2a 
       z_0^{-\alpha}} >\delta'/2\right)\label{eq:M2}
 \end{multline}
by taking $j_M := \sfloor{M^{1/\alpha}\delta'/2}$,
 so that 
$b_n\inv\summ j1{j_M} \Gamma_j^{-1/\alpha}\inddd{M/b_n^\alpha\le \Gamma_j\le 2az_0^{-\alpha}} \le  \delta'/2$ with probability one.
By Markov inequality, we can further bound~\eqref{eq:M2} by, up to a multiplicative constant depending on $\delta'$,
\[
b_n^{-p}  \esp  \left( \sum_{j=j_M+1}^\infty \Gamma_j^{-1/\alpha} 
\inddd{  \Gamma_j\leq 2a 
       z_0^{-\alpha}}\right)^{p}.
\]
If we choose
 $p>1/(1-\beta)$,
then  $b_n^{-p} = o(n\inv)$. Since choosing $M$ and, hence, $j_M$
large enough, we can ensure finiteness of the above expectation, this
completes the proof. 
\end{proof}
\begin{proof}[Proof of Theorem~\ref{thm:SM}]
As in the proof of Proposition~\ref{prop:SM_ell}, it suffices to show,
for all $m\in\N$ and all disjoint open 
 intervals $T_i = (t_i,t_i')\subset[0,1], i=1,\dots,m$,
\[
\left( \max_{k\in   nT_i}  \frac{1}{b_n}\summ
  j1\infty\varepsilon_j G\bigl( \Gamma_j /2b_n^\alpha\bigr)
\inddd{ T^k(U_j\topp n)_0=i_0}\right)_{i=1,\dots,m} \weakto
\pp{ a^{1/\alpha}\eta^{\alpha,\beta}(T_i)}_{i=1,\dots,m} \,.
\]
We will use Theorem 3.2 in \cite{billingsley:1999}. By
Proposition~\ref{prop:SM_ell} and the obvious fact that 
$$
\pp{(\eta^{\alpha,\beta}_\ell(T_i)}_{i=1,\dots,m}
\to \pp{(\eta^{\alpha,\beta}(T_i)}_{i=1,\dots,m}
$$
a.s.~as $\ell\to\infty$, it only remains to check that for any
$i=1,\ldots, m$, 
\begin{multline} \label{e:last.step}
\lim_{\ell\to\infty} \limsup_{n\to\infty} \proba\left(
\frac{1}{b_n}\left|  \max_{k\in   nT_i}  \summ
  j1\infty\varepsilon_j G\bigl( \Gamma_j /2b_n^\alpha\bigr)
 {\bf 1}_{\big\{ T^k(U_j\topp n)_0=i_0\big\}} \right. \right.
\\
- \left.\left. \max_{k\in   nT_i}  \summ
  j1\ell\varepsilon_j  G\bigl( \Gamma_j /2b_n^\alpha\bigr)
\inddd{ T^k(U_j\topp n)_0=i_0}
\right|>\varepsilon\right)=0 
\end{multline}
for any $\varepsilon>0$. However, the above probability dos not exceed
\begin{align*}
&\proba\left(\frac{1}{b_n}
\left| \max_{k\in   nT_i}  \summ
  j{\ell+1}\infty\varepsilon_j G\bigl( \Gamma_j /2b_n^\alpha\bigr)
\inddd{ T^k(U_j\topp n)_0=i_0}
\right|>\varepsilon\right) \\
\leq &\proba\pp{\frac{1}{b_n}\max_{k=0,\dots,n}\abs{\sif
    j{\ell+1}\varepsilon_j
G\bigl( \Gamma_j /2b_n^\alpha\bigr)
\inddd{ T^k(U_j\topp n)_0=i_0}
}> \varepsilon} \,,
\end{align*}
and \eqref{e:last.step} follows from Proposition
\ref{prop:SM_remainder}. 
\end{proof}
 
  \subsection*{Acknowledgments} 
The authors are grateful to Shuyang Bai and Takashi Owada for pointing
out to us mistakes in an earlier version of the paper, and to 
Alexey Kuznetsov for helping us with the proof of Proposition
\ref{prop:first_intersection} by using special functions, in
particular for the reference \citet{prudnikov90integrals}. 
 
 \appendix
\section{Elements of renewal theory}\label{sec:renewal}

Consider an $\N$-valued renewal process $S_0 = 0$, $S_n:=
Y_1+\cdots+Y_n$, $n=1,2,\ldots$ 
whose inter-renewal times $\indn Y$ have a tail distribution $\wb F(n)
:= \proba(Y_1>n)$.  The  renewal function is defined by 
$u(n) := \sif k0\proba(S_k = n)$, and we denote $U(n):= \summ j0n u(j)$.
The following two assumptions are equivalent for all $\beta\in(0,1)$:
as $n\to\infty$, 
\begin{align}\label{eq:1}
\wb F(n) & \sim n^{-\beta}L(n), \\
U(n)& \sim \frac{n^\beta}{\Gamma(1+\beta)\Gamma(1-\beta)L(n)}\label{eq:2}.
\end{align}
See for example \citet[Theorem 8.7.3]{bingham87regular}. By Karamata's
theorem,  
\equh
u(n)  \sim \frac{n^{\beta-1}}{\Gamma(\beta)\Gamma(1-\beta)L(n)}
\label{eq:3}
\eque
implies \eqref{eq:2}. Furthermore, for $p_n:= \proba(Y_1 = n)$, under the  assumption
\equh\label{eq:Doney0}
\sup_{n\in\N}\frac{np_n}{\wb F(n)}<\infty,
\eque
it is known that  \eqref{eq:2} and
\eqref{eq:3} are equivalent; see \citet{doney97onesided}. 

Let now $\{Y_n\topp j\}_{n\in\N}, \, j=1,\dots,m$ be  independent
$\N$-valued renewal processes satisfying \eqref{eq:1} with 
with parameters $\beta_1,\dots,\beta_m\in(0,1)$ respectively, such that
\equh\label{eq:beta'}
\beta^*:= \summ q1m\beta_q - m+1\in(0,1),
\eque
and define $S_0\topp j := 0$, $S\topp j_n:= Y_1\topp j+\cdots+Y_n\topp
j$,  $n\geq 1$. Set 
\[
Y_1^*:= \min\ccbb{\ell\in\N: \ell = S_{n_k}\topp k \mbox{ for some }
  n_k\in\N, \forall k=1,\dots,m},
\]
and iteratively
\[
Y_{m+1}^*
:= \min\ccbb{\ell\in\N: Y_1^*+\cdots+ Y_m^*+\ell = S_{n_k}\topp k
  \mbox{ for some } n_k\in\N, \forall  k=1,\dots,m}.
\]
That is, $\indn{Y^*}$ are the simultaneous renewal times of
$\indn{Y\topp j}, j=1,\dots,m$, and they form another renewal process,
which we refer to as the {\em intersection renewal process} of $m$
independent renewal processes.  We denote by $F^*, u^*$ and $U^*$  the
corresponding functions defined at beginning of this section. 
\begin{Lem}\label{lem:1}
Assume that for every $j=1,\dots,m$,
\[
\wb F\topp j(n) = n^{-\beta_j}L_j(n), \beta_j\in(0,1) 
\]
for some slowly varying at infinity  function $L_j(n)$, and that
\eqref{eq:Doney0} holds. If $\beta_1,\dots,\beta_m$ satisfy
\eqref{eq:beta'}, then the intersection renewal process satisfies, as
$n\to\infty$,  
\[
\wb{F^*}(n) \sim n^{-\beta^*}L^*(n) \qmwith L^*(n) = \frac{\prodd q1m[\Gamma(\beta_q)\Gamma(1-\beta_q)L_q(n)]}{\Gamma(\beta^*)\Gamma(1-\beta^*)}.
\]
\end{Lem}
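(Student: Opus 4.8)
The plan is to reduce the whole statement to a single clean identity for renewal functions and then climb up and down the chain of equivalences between \eqref{eq:1}, \eqref{eq:2} and \eqref{eq:3} already recorded at the start of this appendix. The pivotal observation is that the intersection renewal process is itself a genuine renewal process whose renewal function \emph{factorizes}: writing $u_j$ for the renewal function of $\indn{Y^{(j)}}$ and $u^*$ for that of $\indn{Y^*}$, I claim
\[
u^*(n) = \prod_{j=1}^m u_j(n), \qquad n\in\N.
\]
Indeed, since the increments are strictly positive, at most one partial sum lands on $n$, so $u_j(n)$ is exactly the probability that $n$ is a renewal epoch of the $j$th process; by independence, the probability that $n$ is a \emph{simultaneous} renewal epoch of all $m$ processes is the product. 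As $n$ is a simultaneous epoch precisely when $n = S^*_k$ for some $k$, the left-hand side equals $\sum_{k\ge 0}\proba(S^*_k = n) = u^*(n)$. The regenerative structure — at each simultaneous return all $m$ chains restart independently and identically — guarantees that $\indn{Y^*}$ are i.i.d., and the assumption $\beta^*\in(0,1)$ in \eqref{eq:beta'} makes the intersection recurrent (so that $u^*$ is a bona fide, proper renewal function).

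With the factorization in hand, I would first upgrade the hypothesis on each coordinate. Each process satisfies \eqref{eq:1} with index $\beta_j$, and under the Doney condition \eqref{eq:Doney0} the three relations \eqref{eq:1}, \eqref{eq:2}, \eqref{eq:3} are equivalent, so each individual process obeys the local renewal estimate
\[
u_j(n)\sim \frac{n^{\beta_j-1}}{\Gamma(\beta_j)\Gamma(1-\beta_j)L_j(n)}, \qquad n\to\infty.
\]
Multiplying these finitely many asymptotic equivalences (a finite product of ratios tending to $1$ tends to $1$) and using $\sum_{j=1}^m(\beta_j-1) = \beta^*-1$ yields
\[
u^*(n)\sim \frac{n^{\beta^*-1}}{\prod_{j=1}^m[\Gamma(\beta_j)\Gamma(1-\beta_j)L_j(n)]} = \frac{n^{\beta^*-1}}{\Gamma(\beta^*)\Gamma(1-\beta^*)L^*(n)},
\]
where the last equality is merely the definition of $L^*$, which is slowly varying as a ratio of products of slowly varying functions. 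This is exactly relation \eqref{eq:3} for the intersection renewal process, with index $\beta^*$ and slowly varying function $L^*$.

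To finish, I would apply the equivalences once more, now to $\indn{Y^*}$ itself. By Karamata's theorem, \eqref{eq:3} for the intersection process gives \eqref{eq:2} for it, and since \eqref{eq:1} and \eqref{eq:2} are equivalent for every index in $(0,1)$ by \citet[Theorem 8.7.3]{bingham87regular}, this delivers $\wb{F^*}(n)\sim n^{-\beta^*}L^*(n)$, as claimed; note that this final ascent requires no Doney-type hypothesis for the intersection process, the condition \eqref{eq:Doney0} being used only to reach \eqref{eq:3} for the individual coordinates. I expect the only genuinely delicate point to be the rigorous justification of the product identity $u^*=\prod_j u_j$ — in particular, confirming that the simultaneous returns form a renewal process and that the intersection is recurrent under $\beta^*\in(0,1)$ — since everything downstream is a mechanical application of the renewal-theoretic equivalences stated above.
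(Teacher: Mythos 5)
Your proposal is correct and follows essentially the same route as the paper: both rest on the factorization $u^*(n)=\prod_{q=1}^m u^{(q)}(n)$ by independence, use \eqref{eq:Doney0} to pass from \eqref{eq:1} to the local estimate \eqref{eq:3} for each coordinate, multiply, and then return to \eqref{eq:1} for the intersection process via Karamata and \citet[Theorem 8.7.3]{bingham87regular}. Your additional remarks — that the simultaneous returns form a proper renewal process when $\beta^*>0$ and that no Doney-type condition is needed for the final ascent — are correct and merely make explicit what the paper leaves implicit.
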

\begin{proof}
Let $u\topp q$ denote the renewal  function of the renewal process
$\{Y\topp q_n\}_{n\in\N}$, $q=1,\dots,m$. Because of \eqref{eq:Doney0}
we know that \eqref{eq:3} holds for each $q$. By independence, 
\[
u^*(n) = \prodd q1mu\topp q(n) \sim \frac{n^{\beta^*-1}}{\prodd
  q1m[\Gamma(\beta_q)\Gamma(1-\beta_q)L_q(n)]}. 
\]
Since \eqref{eq:3} implies \eqref{eq:1}, the desired result follows.
\end{proof}

\section{First intersection time of  independent shifted stable regenerative sets}

This paper uses certain results on the  intersections of
shifted stable regenerative sets.  Some of these
results may be known, but we could not find an 
appropriate reference. So we present them in this section. 

Let $B_{a,\beta}$ denote the overshoot distribution of a
$\beta$-stable subordinator over $a>0$. Recall that the density
function of $B_{a,\beta}$ in \eqref{e:ov.density}, 
and that the closure of image of a $\beta$-stable subordinator is
known as a $\beta$-stable regenerative set. Consider two independent
stable regenerative sets $R\topp{\beta_1}$ and $R\topp{\beta_2}$, with
indices $\beta_1,\beta_2$, respectively. We know that if $\beta_{1,2} :=
\beta_1+\beta_2-1>0$, then $R\topp{\beta_1} \cap R\topp{\beta_2}$ is
again a stable regenerative set, with parameter $\beta_{1,2}$.  

We will derive  an explicit formula for the cumulative distribution
function of the first intersection time of the two 
independent stable regenerative sets, with the second one shifted by
$a>0$. This random time is defined as  
\equh\label{eq:D_def}
D_{a,\beta_1,\beta_2} := \min\ccbb{R\topp{\beta_1}\cap \pp{a+R\topp{\beta_2}}}- a = \min\ccbb{\pp{R\topp{\beta_1}-a}\cap R\topp{\beta_2}}.
\eque
 
\begin{Thm}\label{thm:intersection_time}
For all $\beta_1,\beta_2\in(0,1)$ such that $\beta_1+\beta_2-1>0$, 
\equh\label{eq:recursion1}
D_{1,\beta_1,\beta_2} \eqd B_{1,\beta_1}(1+D_{1,\beta_2,\beta_1})
\eque
and
\equh\label{eq:scaling}
\frac{D_{a,\beta_1,\beta_2}}a \eqd D_{1,\beta_1,\beta_2} \qmfa a>0.
\eque
Moreover,
\equh\label{eq:D_CDF}
\proba(D_{1,\beta_1,\beta_2}\le x) = P_D^{\beta_1,\beta_2}(x\mid 1), \
\ x>0,,
\eque
where for $a>0$,
\equh
P_D^{\beta_1,\beta_2}(x\mid a) 
 = \frac1{\Gamma(\beta_{1,2})\Gamma(1-\beta_{1,2})}\int_0^1\pp{\frac
   ax+y}^{\beta_1-1}y^{\beta_2-1}(1-y)^{-\beta_{1,2}}dy. \label{eq:PD}
\eque
\end{Thm}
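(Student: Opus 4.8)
The plan is to prove the three assertions in the order \eqref{eq:scaling}, \eqref{eq:recursion1}, and then the explicit distribution \eqref{eq:D_CDF}--\eqref{eq:PD}, since each step feeds into the next. First I would dispose of the scaling \eqref{eq:scaling}. Stable regenerative sets are self-similar: for every $c>0$ one has $cR\topp\beta\eqd R\topp\beta$ as random closed subsets of $\R_+$, and since $R\topp{\beta_1}$ and $R\topp{\beta_2}$ are independent this holds jointly. Inserting $c=a$ into $D_{a,\beta_1,\beta_2}=\min\big(R\topp{\beta_1}\cap(a+R\topp{\beta_2})\big)-a$ and factoring out $a$ gives $D_{a,\beta_1,\beta_2}\eqd a\,D_{1,\beta_1,\beta_2}$ immediately.

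Next, for the recursion \eqref{eq:recursion1} I would reuse the alternating-overshoot construction from the proof of Lemma~\ref{lem:intersection}. The first common point of $R\topp{\beta_1}$ and $1+R\topp{\beta_2}$ is obtained by starting at the left endpoint $1$ of $1+R\topp{\beta_2}$ and alternately overshooting: set $A_1:=B_{1,\beta_1}$ (overshoot of level $1$ by $R\topp{\beta_1}$), then $A_2:=B_{A_1,\beta_2}$, $A_3:=B_{A_2,\beta_1}$, and so on, each overshoot taken with a fresh independent subordinator by the regenerative (strong Markov) property. Then $D_{1,\beta_1,\beta_2}=\sum_{n\ge1}A_n$, the series being finite almost surely precisely because $\beta_1+\beta_2-1>0$ (Lemma~\ref{lem:intersection}). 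The crucial point is that, conditionally on $A_1$, the tail $\sum_{n\ge2}A_n$ is the very same alternating construction with the roles of $\beta_1,\beta_2$ interchanged and started from level $A_1$, i.e.\ $\sum_{n\ge2}A_n=D_{A_1,\beta_2,\beta_1}$, and it is independent of $A_1$. Combining this with the scaling just proved, $D_{A_1,\beta_2,\beta_1}\eqd A_1D_{1,\beta_2,\beta_1}$ with $D_{1,\beta_2,\beta_1}$ independent of $A_1=B_{1,\beta_1}$, yields $D_{1,\beta_1,\beta_2}\eqd B_{1,\beta_1}\big(1+D_{1,\beta_2,\beta_1}\big)$, which is \eqref{eq:recursion1}.

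Finally, for the distribution \eqref{eq:D_CDF}--\eqref{eq:PD}, write $F_{\beta_1,\beta_2}$ for the law of $D_{1,\beta_1,\beta_2}$. Using the overshoot density \eqref{e:ov.density} at $x=1$, the recursion \eqref{eq:recursion1} turns into the integral equation
\[
F_{\beta_1,\beta_2}(x)=\int_0^x F_{\beta_2,\beta_1}(x/b-1)\,p_B\topp{\beta_1}(b\mid1)\,db,
\]
coupled with its $\beta_1\leftrightarrow\beta_2$ companion. I would solve this by verification together with a uniqueness argument. Setting $G_{\beta_1,\beta_2}(x):=P_D^{\beta_1,\beta_2}(x\mid1)$ from \eqref{eq:PD}, a short Beta-integral computation shows $G_{\beta_1,\beta_2}(0+)=0$ and $G_{\beta_1,\beta_2}(\infty)=1$ (the latter reducing to $\int_0^1 y^{\beta_{1,2}-1}(1-y)^{-\beta_{1,2}}\,dy=\Gamma(\beta_{1,2})\Gamma(1-\beta_{1,2})$), so $G$ is a genuine distribution function; one then checks that $G$ solves the integral equation above. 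For uniqueness I would iterate the recursion once to reach the affine (perpetuity) form $D_{1,\beta_1,\beta_2}\eqd Q+MD'$ with $M=B_{1,\beta_1}B_{1,\beta_2}$, $Q=B_{1,\beta_1}(1+B_{1,\beta_2})$, and $D'\eqd D_{1,\beta_1,\beta_2}$ independent of $(Q,M)$. Since $\esp\log M=\esp\log C_1<0$ whenever $\beta_1+\beta_2>1$---exactly the sign computation already carried out in the proof of Lemma~\ref{lem:intersection}---the standard theory of random affine recursions yields a unique stationary law, forcing $F=G$. The scaling \eqref{eq:scaling} then upgrades $F_{\beta_1,\beta_2}(x)=P_D^{\beta_1,\beta_2}(x\mid1)$ to $P_D^{\beta_1,\beta_2}(x\mid a)=\proba(D_{a,\beta_1,\beta_2}\le x)=G_{\beta_1,\beta_2}(x/a)$, which is exactly \eqref{eq:PD}.

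The hard part will be the verification that $G$ solves the integral equation: after inserting \eqref{eq:PD} and \eqref{e:ov.density} and interchanging the order of integration, this reduces to a nontrivial Beta-type integral identity, the step where special-function identities and integral tables enter. Everything else---the scaling, the alternating-overshoot bookkeeping, the sign of $\esp\log M$, and the uniqueness for affine recursions---is either routine or already available from the proof of Lemma~\ref{lem:intersection}.
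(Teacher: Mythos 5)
Your proposal is correct and follows essentially the same route as the paper: the scaling comes from self-similarity of the regenerative sets, the recursion \eqref{eq:recursion1} from the alternating-overshoot construction of Lemma~\ref{lem:intersection} together with the strong Markov property, and the identification of the law by verifying that $P_D^{\beta_1,\beta_2}(\cdot\mid 1)$ solves the resulting integral equation combined with a uniqueness argument for the affine recursion (your perpetuity argument with $\esp\log M<0$ is the same mechanism the paper invokes as the Letac principle). The Beta-type integral identity you defer as ``the hard part'' is precisely the content of the paper's Proposition~\ref{prop:first_intersection}, proved there via hypergeometric function identities and an integral table.
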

The main ingredient of the proof is the following proposition.
\begin{Prop}\label{prop:first_intersection} For all
  $\beta_1,\beta_2\in(0,1)$ such that $\beta_1+\beta_2-1>0$,    
\equh\label{eq:recursion_CDF}
P^{\beta_1,\beta_2}_D(x\mid 1) = \int_0^xp_B\topp{\beta_1}(y\mid
1)P_D^{\beta_2,\beta_1}(x-y\mid y) dy, \ \ x>0.
\eque
\end{Prop}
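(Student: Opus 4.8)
The plan is to prove the integral recursion by conditioning on the first point of $R^{(\beta_1)}$ at or beyond the base level of the shifted set, and then invoking the strong Markov (regeneration) property of stable regenerative sets. Reading $P_D^{\beta_1,\beta_2}(x\mid a)$ as the distribution function $\proba(D_{a,\beta_1,\beta_2}\le x)$, the assertion~\eqref{eq:recursion_CDF} is equivalent to the distributional identity
\[
D_{1,\beta_1,\beta_2}\eqd B_{1,\beta_1}+\wt D,
\]
where, conditionally on $B_{1,\beta_1}=y$, the variable $\wt D$ has the law of $D_{y,\beta_2,\beta_1}$ and is independent of $B_{1,\beta_1}$. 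Indeed, integrating this identity against the overshoot density~\eqref{e:ov.density} with $x=1$, and using that $\proba(D_{y,\beta_2,\beta_1}\le x-y)=0$ whenever $y>x$, reproduces exactly the right-hand side of~\eqref{eq:recursion_CDF}. So the entire task is to establish the displayed distributional identity.

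To establish it, first I would write $R_1=R^{(\beta_1)}$ (containing $0$) and $R_2=1+R^{(\beta_2)}$ (containing $1$), so that $1+D_{1,\beta_1,\beta_2}=\min(R_1\cap R_2)$, and note that every common point lies in $[1,\infty)$ because $R_2\subset[1,\infty)$. The first point of $R_1$ at or after level $1$ is $1+B_{1,\beta_1}$, where $B_{1,\beta_1}$ is the overshoot; by the very definition of the overshoot, $R_1$ has no points in $[1,1+B_{1,\beta_1})$, so no common point can occur in that gap. At the point $1+B_{1,\beta_1}$ the set $R_1$ regenerates, so that $R_1\cap[1+B_{1,\beta_1},\infty)-(1+B_{1,\beta_1})$ is a fresh copy $R'$ of $R^{(\beta_1)}$, independent of $B_{1,\beta_1}$ and of $R^{(\beta_2)}$.

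The crux is the identification of the residual problem. Setting $y:=B_{1,\beta_1}$ and changing variables by $s=t-1$, a point $t\ge 1+y$ is common to $R_1$ and $R_2$ if and only if $s\in R^{(\beta_2)}$ and $s\in y+R'$; hence the first common point corresponds to $\min\bigl(R^{(\beta_2)}\cap(y+R')\bigr)$. By the definition~\eqref{eq:D_def}, this minimum equals $y+D_{y,\beta_2,\beta_1}$, with the indices now swapped because after the shift $R^{(\beta_2)}$ is the base set starting at $0$ while the regenerated copy $R'$ of $R^{(\beta_1)}$ is the set shifted by $y$. Conditionally on $B_{1,\beta_1}=y$, this residual depends only on $R^{(\beta_2)}$ and $R'$, both independent of the overshoot, which gives precisely the required conditional independence. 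Therefore $1+D_{1,\beta_1,\beta_2}=1+B_{1,\beta_1}+D_{B_{1,\beta_1},\beta_2,\beta_1}$, i.e.\ the distributional identity above. Since $\beta_1+\beta_2-1>0$, Lemma~\ref{lem:intersection} ensures the relevant intersections are almost surely nonempty, so all the random times are finite.

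I expect the main obstacle to be the careful bookkeeping in this residual step: justifying the index swap (the $\beta_2$-set becoming the base and the regenerated $\beta_1$-set becoming the shifted set) and confirming the conditional independence via the strong Markov property applied jointly to the two independent subordinators. A minor point to dispatch is that the endpoint $1+B_{1,\beta_1}$ is almost surely not itself a common point: since $R^{(\beta_2)}$ does not hit the fixed level $y$ and is independent of $B_{1,\beta_1}$, the event $\{B_{1,\beta_1}\in R^{(\beta_2)}\}$ has probability zero and may be discarded.
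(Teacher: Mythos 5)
There is a genuine gap, and it lies in your very first interpretive step: you read $P_D^{\beta_1,\beta_2}(x\mid a)$ as the distribution function $\proba(D_{a,\beta_1,\beta_2}\le x)$, but in the paper this symbol denotes the \emph{explicit analytic expression} \eqref{eq:PD} (a Beta-type integral, equivalently a hypergeometric function). Proposition \ref{prop:first_intersection} is therefore an identity between two explicit integrals, and the paper proves it by a special-functions computation (Euler's integral representation of ${}_2F_1$, the Euler transformation, and an integral from Prudnikov's tables). The whole reason for proving the identity for this \emph{candidate} formula is that, combined with the probabilistic recursion \eqref{eq:recursion2} and the Letac uniqueness principle, it yields \eqref{eq:D_CDF}, i.e.\ that the law of $D_{1,\beta_1,\beta_2}$ is in fact given by \eqref{eq:PD}. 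Under your reading, \eqref{eq:D_CDF} becomes a tautology and the explicit formula \eqref{eq:PD} is never connected to the probability at all, so the main content of Theorem \ref{thm:intersection_time} would be left unproved.

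What your argument actually establishes --- correctly, via the overshoot/regeneration decomposition and the strong Markov property --- is the distributional identity $D_{1,\beta_1,\beta_2}\eqd B_{1,\beta_1}+D_{B_{1,\beta_1},\beta_2,\beta_1}$, which together with the scaling \eqref{eq:scaling} is exactly the recursion \eqref{eq:recursion1} that the paper derives from Lemma \ref{lem:intersection} at the start of the proof of Theorem \ref{thm:intersection_time}. That is a sound piece of the overall argument, but it is the \emph{other} half: it says the true CDF of $D$ satisfies the recursion, not that the explicit function \eqref{eq:PD} does. To prove the proposition as stated you would still need either to verify the analytic identity \eqref{eq:recursion_CDF} for the function defined in \eqref{eq:PD} directly, or to give an independent (recursion-free) derivation that $\proba(D_{a,\beta_1,\beta_2}\le x)$ equals \eqref{eq:PD} --- neither of which appears in your proposal.
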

We first show how to derive Theorem \ref{thm:intersection_time} from this proposition.
\begin{proof}[Proof of Theorem \ref{thm:intersection_time}]
It follows from Lemma \ref{lem:intersection} that 
\[
D_{1,\beta_1,\beta_2} \eqd B_{1,\beta_1,0} \sum_{n=0}^\infty \pp{\prodd q1{n} B_{1,\beta_1,q}B_{1,\beta_2,q}+ B_{1,\beta_2,1}\prodd q1n B_{1,\beta_1,q}B_{1,\beta_2,q+1}},
\]
with the convention $\prodd q10 = 1$, 
where on the right-hand side $\{B_{1,\beta_i,n}\}_{n\in\N_0}$ are
i.i.d.~copies of $B_{1,\beta_i}$, $i=1,2$,
$\{B_{1,\beta_1,n}\}_{n\in\N_0}$ and $\{B_{1,\beta_2,n}\}_{n\in\N}$
are independent, and the series converges almost surely. This implies
the recursive relation \eqref{eq:recursion1}. Furthermore,
\eqref{eq:scaling} follows from \eqref{eq:D_def} and the scaling
invariance of the regenerative sets. 

It remains to prove \eqref{eq:D_CDF}. It follows from \eqref{eq:recursion1}  that
\equh\label{eq:recursion2}
D_{1,\beta_1,\beta_2} \eqd
B_{1,\beta_1}\bb{1+B_{1,\beta_2}(1+D_{1,\beta_1,\beta_2})}. 
\eque
By the Letac principle \citep{letac86contraction} applied to the recursion
\[
D_n =B_{1,\beta_1,n}[1+B_{1,\beta_2,n}(1+D_{n-1})], n\in\N,
\]
the law of $D_{1,\beta_1,\beta_2}$ satisfying 
\eqref{eq:recursion2}   is uniquely determined. Therefore,  it
suffices to show that a random variable whose law is given by the
right hand side of \eqref{eq:D_CDF} satisfies
\eqref{eq:recursion1}, that is that 
\[
P_D^{\beta_1,\beta_2}(x\mid 1) = \int_0^xp_B\topp{\beta_1}(y\mid 1) P_D^{\beta_2,\beta_1}\pp{\frac xy-1\mmid 1}dy, \mfa x>0.
\]
By the scaling property \eqref{eq:scaling} this is exactly
\eqref{eq:recursion_CDF}. 
\end{proof} 
\begin{proof}[Proof of Proposition \ref{prop:first_intersection}] 
Recall that the hypergeometric function  ${}_2F_1$ is defined as
\[
{}_2F_1(a,b;c;z) = \sif n0\frac{(a)_n(b)_n}{(c)_n}{\frac{z^n}{n!}}
\]
(for $c$ that is not a nonpositive integer) 
with $(a)_0 = 0$ and $(a)_n = a(a+1)\cdots(a+n-1)$ for $n\in\N$. 
By the Euler integral representation 
\[
B(b,c-b){}_2F_1(a,b;c;z) = \int_0^1x^{b-1}(1-x)^{c-b-1}(1-zx)^{-a}dx,
\]
for ${\rm Re}(c) > {\rm Re}(b)>0$ and $z\in\C\setminus[1,\infty)$ ($B$
is the beta function), we have 
\[
P_D^{\beta_1,\beta_2}(x\mid a) =
\frac{\Gamma(\beta_2)}{\Gamma(\beta_{1,2})\Gamma(2-\beta_1)}\pp{\frac
  ax}^{\beta_1-1}{}_2F_1\pp{1-\beta_1,\beta_2;2-\beta_1;-\frac xa}. 
\]
Therefore, the right-hand side of   \eqref{eq:recursion_CDF} is 
\begin{multline*}
\frac1{\Gamma(\beta_{1,2})\Gamma(2-\beta_2)\Gamma(1-\beta_1)}\\
\times \int_0^x\frac1{1+y}\frac1{y^{\beta_1}}\pp{\frac y{x-y}}^{\beta_2-1}{}_2F_1\pp{1-\beta_2,\beta_1;2-\beta_2;1-\frac xy}dy.
\end{multline*}
Changing the variable $u = x/y-1$, the above becomes
\[
\frac{x^{1-\beta_1}}{\Gamma(\beta_{1,2})\Gamma(2-\beta_2)\Gamma(1-\beta_1)} \int_0^\infty \frac {u^{1-\beta_2}}{u+x+1}(1+u)^{\beta_1-1}{}_2F_1\pp{1-\beta_2,\beta_1;2-\beta_2;-u}du.
\]
By the Euler transformation
\[
(1+u)^{\beta_1-1}{}_2F_1\pp{1-\beta_2,\beta_1;2-\beta_2;-u} = F_1\pp{1,1-\beta_{1,2};2-\beta_2;-u},
\]
 this can be written as 
\equh\label{eq:hypergeometric1}
\frac{x^{1-\beta_1}}{\Gamma(\beta_{1,2})\Gamma(2-\beta_2)\Gamma(1-\beta_1)} \int_0^\infty \frac {u^{1-\beta_2}}{u+x+1}{}_2F_1\pp{1,1-\beta_{1,2};2-\beta_2;-u}du.
\eque
Using the table of integrals of
\citet[2.21.1.16]{prudnikov90integrals}, 
\begin{multline*}
\int_0^\infty\frac{x^{c-1}}{(x+z)^\rho}{}_2F_1(a,b;c;-wx)dx\\
= w^{a-c}\frac{\Gamma(c)\Gamma(a-c+\rho)\Gamma(b-c+\rho)}{\Gamma(a+b-c+\rho)}{}_2F_1(a-c+\rho,b-c+\rho;a+b-c+\rho;1-wz)
\end{multline*}
(provided ${\rm Re}(a+\rho), {\rm Re}(b+\rho)>{\rm Re}(c)>0, |\arg
w|,|\arg z|<\pi)$), \eqref{eq:hypergeometric1}  becomes 
\[
\frac{\Gamma(\beta_2)}{\Gamma(\beta_{1,2})\Gamma(2-\beta_1)}x^{1-\beta_1}{}_2F_1(\beta_2,1-\beta_1;2-\beta_1;-x) = P_D^{\beta_1,\beta_2}(x\mid 1). 
\]
This completes the proof.
\end{proof}
\begin{Coro}\label{coro:1}
Let $\ell\in\N, \ell\ge2$ and $\beta_1,\dots,\beta_\ell\in(0,1)$ be such that
\[
\beta^*_\ell:=\summ j1\ell\beta_j-\ell+1 > 0.
\]
For each $j=1,\dots,\ell$, let $R\topp{\beta_j}_j$ be a $\beta_j$-stable
regenerative set and $V\topp{\beta_j}_j$   a random variable with
$\proba(V\topp{\beta_j}_j\le x) = x^{1-\beta_j}, \, x\in(0,1)$. Assume that all $R\topp{\beta_j}_j, V\topp{\beta_j}_j, j=1,\dots,\ell$ are independent. 
 Then,
 \[
 \bigcap_{j=1}^\ell\pp{V\topp{\beta_j}_j+R\topp{\beta_j}_j} \eqd \wt V\topp\ell+R\topp{\beta_\ell^*},
 \]
where $R\topp{\beta^*_\ell}$ is a $\beta^*_\ell$-stable regenerative
set, independent of a  nonnegative random variable $\wt V\topp\ell$,
whose law satisfies 
 \equh\label{eq:Vt}
 \proba\pp{\wt V\topp\ell\le x} =
 \frac{x^{1-\beta^*_\ell}}{\Gamma(\beta^*_\ell)\Gamma(2-\beta^*_
   \ell)} \prodd j1\ell \pp{\Gamma(\beta_j)\Gamma(2-\beta_j)} \
 \text{for} \  x\in[0,1]. 
 \eque 
\end{Coro}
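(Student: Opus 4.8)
The plan is to separate the statement into a structural part and a distributional part. The structural part asserts that the intersection is a randomly shifted $\beta_\ell^*$-stable regenerative set with the shift independent of the regenerative piece; the distributional part identifies the law \eqref{eq:Vt} of the shift $\wt V^{(\ell)}$. For the structural part I would first record that the partial parameters $\beta_k^*:=\sum_{j=1}^k\beta_j-k+1$ are strictly decreasing in $k$, since $\beta_k^*-\beta_{k+1}^*=1-\beta_{k+1}>0$; hence $0<\beta_\ell^*\le\beta_k^*<\beta_1<1$ for every $k\le\ell$. This lets me intersect the shifted sets one at a time, applying Lemma \ref{lem:intersection} at each step (every intermediate index lies in $(0,1)$), to conclude that $\bigcap_{j=1}^\ell(V_j^{(\beta_j)}+R_j^{(\beta_j)})$ is a.s.\ nonempty and, by the strong Markov (regenerative) property applied simultaneously to all subordinators, equals in law $\wt V^{(\ell)}+R^{(\beta_\ell^*)}$ with $R^{(\beta_\ell^*)}$ a standard $\beta_\ell^*$-stable regenerative set independent of its left endpoint $\wt V^{(\ell)}$. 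It then remains only to find the law of $\wt V^{(\ell)}$ on $[0,1]$.

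I would compute that law through expected local time (potential densities). Write $u^{(\beta)}(s)=s^{\beta-1}/(\Gamma(\beta)\Gamma(1-\beta))$ for the potential density of a $\beta$-stable regenerative set. For a single shifted block, since $V_j^{(\beta_j)}$ has density $(1-\beta_j)v^{-\beta_j}$ on $(0,1)$, averaging the shifted potential density over $V_j^{(\beta_j)}$ gives, for $t\in(0,1)$,
\[
\int_0^t(1-\beta_j)v^{-\beta_j}u^{(\beta_j)}(t-v)\,dv=\frac{1-\beta_j}{\Gamma(\beta_j)\Gamma(1-\beta_j)}\,B(1-\beta_j,\beta_j)=1-\beta_j,
\]
a constant, reflecting the stationarity built into the construction. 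By the multiplicativity of potential densities of independent intersecting regenerative sets (the product rule of \cite{hawkes:1977}, whose discrete shadow is Lemma \ref{lem:1}) together with the independence of the $V_j^{(\beta_j)}$, the expected local time density of the intersection at $t\in(0,1)$ is the constant
\[
\Lambda_0=\frac{\prod_{j=1}^\ell\Gamma(\beta_j)\Gamma(1-\beta_j)}{\Gamma(\beta_\ell^*)\Gamma(1-\beta_\ell^*)}\prod_{j=1}^\ell(1-\beta_j)=\frac{\prod_{j=1}^\ell\Gamma(\beta_j)\Gamma(2-\beta_j)}{\Gamma(\beta_\ell^*)\Gamma(1-\beta_\ell^*)},
\]
the leading factor being the normalizing constant matching the intersection local time to the standard local time of $R^{(\beta_\ell^*)}$.

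On the other hand, using the decomposition above, the same expected density at $t\in(0,1)$ equals the Abel-type convolution $\int_{[0,t)}u^{(\beta_\ell^*)}(t-w)\,dF(w)$, where $F$ is the distribution function of $\wt V^{(\ell)}$; only the values of $F$ on $[0,1)$ enter, since $t<1$. Equating this to $\Lambda_0$ yields an Abel integral equation on $(0,1)$. Testing the ansatz $F(x)=Cx^{1-\beta_\ell^*}$ and using $\int_0^t(t-w)^{\beta_\ell^*-1}w^{-\beta_\ell^*}\,dw=B(\beta_\ell^*,1-\beta_\ell^*)$ gives $C(1-\beta_\ell^*)=\Lambda_0$, i.e.\ $C=\prod_{j=1}^\ell\Gamma(\beta_j)\Gamma(2-\beta_j)\big/\big(\Gamma(\beta_\ell^*)\Gamma(2-\beta_\ell^*)\big)$, which is exactly \eqref{eq:Vt}; uniqueness of the solution of the Abel equation completes the identification.

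The delicate point is the second paragraph: justifying the product rule for the expected intersection local time of \emph{shifted} sets and fixing its normalization so that the regenerative factor is a standard $\beta_\ell^*$-stable set with density $u^{(\beta_\ell^*)}$ --- an incorrect constant throws $C$ off by a fixed factor. This is classical but must be handled carefully. A self-contained alternative, matching the ``corollary'' framing, is to induct on $\ell$ using the explicit first-intersection-time distribution: the base case $\ell=2$ follows by integrating \eqref{eq:D_CDF} of Theorem \ref{thm:intersection_time} against the laws of $V_1,V_2$, while the inductive step rests on $\wt V^{(\ell)}=\max(\wt V^{(\ell-1)},V_\ell)+D_{|\wt V^{(\ell-1)}-V_\ell|,\,\cdot,\,\cdot}$; because for $x\le1$ only the values of $F_{\wt V^{(\ell-1)}}$ on $(0,1)$ are needed, the induction closes, and the convolution identity of Proposition \ref{prop:first_intersection} reproduces \eqref{eq:Vt}. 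I would present the local time argument as the main line and keep the inductive route as a check.
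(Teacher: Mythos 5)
Your ``check'' route is, almost verbatim, the paper's own proof: an induction on $\ell$ whose base case $\ell=2$ integrates the first-intersection-time law $P_D^{\beta_1,\beta_2}(\cdot\mid\cdot)$ of Theorem \ref{thm:intersection_time} against the two shifts (split according to which of $V_1,V_2$ is larger, then Fubini), and whose inductive step uses exactly your observation that for $x\le 1$ only the restriction of the law of $\widetilde V^{(\ell)}$ to $[0,1]$ matters, where by induction it is proportional to $x^{1-\beta^*_\ell}$ and hence the two-set computation applies again after conditioning on $\{\widetilde V^{(\ell)}\le 1\}$. So that part is complete and correct. Your main line --- computing the expected local time density of the intersection in two ways and solving the resulting Abel equation --- is genuinely different from the paper and is attractive because it replaces the hypergeometric-function manipulations of Proposition \ref{prop:first_intersection} by a one-line Beta integral; the arithmetic you give is internally consistent and reproduces \eqref{eq:Vt}.

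However, the step you yourself flag is a real gap, not just a presentational delicacy: the assertion that the intersection of the \emph{shifted} sets carries a local time whose expected density is $\prod_j \esp\bigl[u^{(\beta_j)}\bigl((t-V_j)_+\bigr)\bigr]$, \emph{and} that this normalization of the intersection local time is the one under which the regenerative factor of $\widetilde V^{(\ell)}+R^{(\beta^*_\ell)}$ has potential density $u^{(\beta^*_\ell)}$, is precisely where a spurious multiplicative constant would enter and corrupt $C$. Hawkes' product formula is stated for unshifted regenerative sets issued from the origin; you need (i) its extension to the delayed case by conditioning on the shifts, and (ii) a calibration of the constant, which can be done by specializing to $V_j\equiv 0$ (where the intersection is a standard $\beta^*_\ell$-stable regenerative set, so the constant relating $\prod_j u^{(\beta_j)}$ to $u^{(\beta^*_\ell)}$ is forced). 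Neither step is carried out in your writeup, and the uniqueness claim for the Abel equation should also be stated for measures rather than functions since $F$ is only determined on $[0,1)$ by the equation on $(0,1)$. As submitted, the main line is therefore incomplete on its own; it is the backup induction that actually proves the corollary, and it coincides with the paper's argument.
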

\begin{proof}
The proof is by induction in $\ell$. For $\ell=1$ the claim is
trivial. 
We proceed with the case $\ell = 2$. We already know that the intersection
 interest has the law of a $\beta^*_{2}$-stable regenerative
set shifted by an independent random variable, so it 
suffices to check that the law of the shift satisfies \eqref{eq:Vt}. 
We will use \eqref{eq:PD} in the form (obtained by a change of
variables) 
\[
P_D^{\beta_1,\beta_2}(x\mid a) =\frac1{\Gamma(\beta_{2}^*)\Gamma(1-\beta_{2}^*)} \int_0^x(a+y)^{\beta_1-1}y^{\beta_2-1}(x-y)^{-\beta_{1,2}}dy.
\]
Fix $x\in (0,1)$. Then 
\begin{multline}
\label{eq:V*}
\proba\pp{\wt V\topp 2\le x} = \int_0^x
(1-\beta_1)y_1^{-\beta_1}\int_0^{y_1}(1-\beta_2)y_2^{-\beta_2}P_D^{\beta_2,\beta_1}(x-y_1\mid
y_1-y_2)dy_2dy_1 \\
+ \int_0^x
(1-\beta_1)y_1^{-\beta_1}\int_{y_1}^x(1-\beta_2)y_2^{-\beta_2}P_D^{\beta_1,\beta_2}(x-y_2\mid
y_2-y_1)dy_2dy_1, 
\end{multline}
Denote 
\[
c_{\beta_1,\beta_2} := \frac{(1-\beta_1)(1-\beta_2)}{\Gamma(\beta^*_2)\Gamma(1-\beta^*_2)}.
\]
Then the first double integral in \eqref{eq:V*} can be reduced, after
a change of variable, to 
\[
 c_{\beta_1,\beta_2} \int_0^x\int_0^{y_1}\int_{y_1}^x
 y_1^{-\beta_1}y_2^{-\beta_2}(z-y_2)^{\beta_2-1}(z-y_1)^{\beta_1-1}(x-z)^{-\beta^*_2}dzdy_2dy_1,
\]
while the second double integral in \eqref{eq:V*} reduces, similarly,
to 
\[
c_{\beta_1,\beta_2} \int_0^x\int_{y_1}^x\int_{y_2}^x y_1^{-\beta_1}y_2^{-\beta_2}(z-y_1)^{\beta_1-1}(z-y_2)^{\beta_2-1}(x-z)^{-\beta^*_2}dzdy_2dy_1.
\]
By Fubini theorem, for any nonnegative function $f$, 
\[
\int_0^x\int_0^{y_1}\int_{y_1}^x f
dzdy_2dy_1+\int_0^x\int_{y_1}^x\int_{y_2}^xfdzdy_2dy_1
=\int_0^x\int_0^z\int_0^z fdy_1dy_2dz. 
\]
Therefore, 
\begin{align*}
&\proba\pp{\wt V\topp 2\le x} \nonumber
\\&=  c_{\beta_1,\beta_2}\int_0^x\int_0^z\int_0^zy_1^{-\beta_1}y_2^{-\beta_2}(z-y_1)^{\beta_1-1}(z-y_2)^{\beta_2-1}(x-z)^{-\beta^*_2}dy_1dy_2dz\nonumber\\
& = c_{\beta_1,\beta_2}B(1-\beta_1,\beta_1)B(1-\beta_2,\beta_2)\int_0^x(x-z)^{-\beta^*_2}dz,
\end{align*}
which is the same as \eqref{eq:Vt} with $\ell = 2$.

Next, suppose that the claim holds for some $\ell\geq 2$, and consider
the intersection of $\ell+1$ independent shifted stable regenerative
sets. By the assumption of the induction, 
\[
\bigcap_{j=1}^{\ell+1}\pp{V_j\topp{\beta_j}+R_j\topp{\beta_j}}  \eqd \pp{\wt V\topp\ell+R\topp{\beta^*_\ell}}\cap \pp{V_{\ell+1}\topp{\beta_{\ell+1}}+R_{\ell+1}\topp{\beta_{\ell+1}}},
\]
where the four random elements on the right-hand side above are
assumed to be independent. Again the intersection on the right-hand
side above, by strong Markov property, is a shifted stable
regenerative set with index $\beta^*_\ell+\beta_{\ell+1}-1 = \beta^*_{\ell+1}$, and it
suffices to identify the law of the shift $\wt V\topp {\ell+1}$.  

Let $x\in (0,1)$. We have
\begin{align*}
\proba\pp{\wt V\topp {\ell+1}\le x} & = \proba\pp{\wt V\topp {\ell+1}\le
  x, \, \wt V\topp {\ell}\le
  1} \\
& =  \proba\pp{\wt V\topp {\ell+1}\le
  x \Big|\wt V\topp {\ell}\le   1} \proba\pp{\wt V\topp {\ell}\le 1}\,.
\end{align*}
By the assumption of the induction, the conditional cumulative distribution function of
$\wt V\topp {\ell}$ given $\wt V\topp {\ell}\le   1$ is
$x^{1-\beta^*_\ell}, \, 0\leq x\leq 1$. Therefore, we are in the
situation of the intersection of $2$ independent shifted stable regenerative
sets, which has already been considered. Using once again the
assumption of the induction, we obtain
\begin{align*}
\proba\pp{\wt V\topp {\ell+1}\le x} 
 = &
  \frac{x^{1-\beta^*_{\ell+1}}}{\Gamma(\beta^*_{\ell+1})\Gamma(2-\beta^*_{\ell+1})} 
\Bigl( \Gamma(\beta^*_{\ell})\Gamma(2-\beta^*_{\ell})
                                      \Gamma(\beta_{\ell+1})\Gamma(2-\beta_{\ell+1})\Bigr)
  \\ 
& \times \frac{1}{\Gamma(\beta^*_\ell)\Gamma(2-\beta^*_
   \ell)} \prodd j1\ell \pp{\Gamma(\beta_j)\Gamma(2-\beta_j)}
  \\
= &
    \frac{x^{1-\beta^*_{\ell+1}}}{\Gamma(\beta^*_{\ell+1})\Gamma(2-\beta^*_{\ell+1})}  
\prodd j1{\ell+1} \pp{\Gamma(\beta_j)\Gamma(2-\beta_j)}\,,
\end{align*}
as required. 
\end{proof}

\bibliographystyle{apalike}
\bibliography{bibfile,references}
\end{document}